\newtheorem{theorem}{Theorem}
\newtheorem{proposition}[theorem]{Proposition}
\newtheorem{lemma}[theorem]{Lemma}
\newtheorem{corollary}[theorem]{Corollary}
\newtheorem{remark}[theorem]{Remark}
\newtheorem{definition}[theorem]{Definition}
\newcommand{\R}{\mathbb{R}}
\newcommand{\grad}{\mbox{grad}}
\newcommand{\iis}{isometric immersions }
\newcommand{\h}{\mathcal{H}}
\def\P{{\cal P}}
\def\<{\langle}
\def\>{\rangle}
\def\bea{\begin{eqnarray*} }
\def\eea{\end{eqnarray*} }
\def\be{\begin{equation}}
\def\ee{\end{equation}}
\def\qed{\ifhmode\unskip\nobreak\fi\ifmmode\ifinner
\else\hskip5 pt \fi\fi\hbox{\hskip5 pt \vrule width4 pt
height6 pt  depth1.5 pt \hskip 1pt }}
\begin{document}
\title{Minimal $(n-2)$-umbilic submanifolds of the Euclidean space}
\author{A.E.\ Kanellopoulou}
\date{}
\maketitle
\renewcommand{\thefootnote}{\fnsymbol{footnote}} 
\footnotetext{\emph{The paper is part of the author's PhD thesis, written in the Department of Mathematics at the University of Ioannina under the supervision of Th. Vlachos.}}     
\renewcommand{\thefootnote}{\arabic{footnote}} 

\renewcommand{\thefootnote}{\fnsymbol{footnote}} 
\footnotetext{\emph{2020 Mathematics Subject Classification.}  Primary 53C42; Secondary 53C40, 53B25.}  
\renewcommand{\thefootnote}{\arabic{footnote}}

\renewcommand{\thefootnote}{\fnsymbol{footnote}} 
\footnotetext{\emph{Key Words and Phrases.}  Minimal submanifolds, $(n-2)$-umbilic submanifold, rotational submanifolds, singular minimal surfaces, Weierstrass type representation.}  

\begin{abstract}

This paper investigates minimal $n$-dimensional submanifolds in the Euclidean space that are $(n-2)$-umbilic, meaning they carry an umbilical distribution of rank $n-2$. We establish a correspondence between the class of minimal ($n-2$)-umbilic submanifolds and the class of ($n-2$)-singular minimal surfaces. These surfaces are the critical points of its "energy potential" and have been previously studied in various contexts, including physics and architecture where, for instance, they model surfaces with minimal potential energy under gravitational forces. We show that minimal, generic, ($n-2$)-umbilic submanifolds, $n\geq4$, are ($n-2$)-rotational submanifolds whose profile is an ($n-2$)-singular minimal surface and vise versa. Furthermore, we develop a Weierstrass type method of local parametrization of all ($n-2$)-singular minimal surfaces, enabling a parametric description of all  minimal $n$-dimensional, $n\geq4$, hypersurfaces of the Euclidean space with a nowhere vanishing principal curvature of multiplicity $n-2$.

\end{abstract}

 \section{Introduction}

It is a classical result that the only surface of revolution in the three dimensional Euclidean space that is minimal is the catenoid. As a generalization of the catenoid, D. E. Blair \cite{DEB} characterizes minimal conformally flat hypersurfaces of ($n+1$)-dimensional Euclidean space. It is well known that conformally flat hypersurfaces have a principal curvature of multiplicity $n-1$ at every point (see \cite{DODA}). Blair proved that the only minimal hypersurfaces of this type are the generalized catenoid  which is a rotational submanifold whose profile curve arises as solution of a specific ordinary differential equation.
 
We focus on generalizing Blair's result by studying minimal hypersurfaces in the Euclidean space $\mathbb{R}^{n+1}$, which possess a principal curvature of multiplicity $n-2$. Moreover, we are able to produce results for a broader class of minimal submanifolds in arbitrary codimension, specifically those belonging  to the class of $k$-umbilic submanifolds. We recall that an isometric immersion is called $k$-umbilic if it carries a maximal $k$-dimensional totally umbilic distribution. This means that there exists a smooth vector field $\eta\in N_fM$ of unit length and a positive function $\mu\in\mathbb{C}^\infty(M)$ so that:
$$
\mathcal{U}=\left\{X\in T_fM:\alpha^f(X,Y)=\mu \<X,Y\>\eta \;\;\text{for every}\;\;Y\in TM\right\}.
$$
 More presicely, we study the class of $n$-dimensional minimal ($n-2$)-umbilic submanifolds. 
 
An important class of {\it ($n-2$)-umbilic submanifolds} is the class of {\it ($n-2$)-rotational submanifolds}. To our surprise, minimal ($n-2$)-umbilic submanifolds turn out to be ($n-2$)-rotational submanifolds over a special class of surfaces, the {\it ($n-2$)-singular minimal surfaces}. In general, an {\it $a$-singular minimal surface} with {\it density vector $e$} and $a\in\mathbb{R}$, is by definition an isometric immersion $g\colon L^2\to\mathbb{R}^{p+2}$ of a two dimensional Riemannian manifold $L^2$ whose mean curvature vector field $\h_g$ satisfies the condition $2\h_g=a e^{\perp}/\phi$, where $e^\perp$ is the orthogonal projection of the density vector $e$ onto the normal bundle of $g$ and $\phi=\<g, e\>>0$. 

The concept of singular minimal surfaces emerged in the 1980s as a significant topic in geometric analysis, originating from the study of the isoperimetric problem in $\mathbb{R}^3$. Specifically, it arose in the search for a surface $L^2$ that minimizes potential energy under gravitational forces, that is, a surface with the lowest center of gravity (see \cite{domes}). After the derivation of the law of refraction of light by Pierre Fermat in 1662, the variational problem of minimizing the functional
$$
\int \omega(x, y, z)\sqrt{dx^2+dy^2+dz^2},
$$
gained prominence. According to Fermat's principle, this
variational problem governs the propagation of light in an isotropic
but inhomogeneous medium where $1/\omega$ is the velocity of the light
particle. In 1696, John Bernoulli proposed the problem of quickest descend, also known as {\it the brachystochrone}, which consists in minimizing the integral 
$
\int 1/z\sqrt{dx^2+dz^2}
$.
In 1744, Leonhard Euler worked on a more general case, where $1/z= \omega(x, z)$. The integral $\int z\sqrt{dx^2+dz^2}$, where $z>0$, leads to the celebrated problem, to determine the surfaces of revolution minimizing area. The regular extremals of this integral are the catenaries given by
$$
z=a\cosh\frac{x-x_0}{a}.
$$
%They are closely related to the following isoperimetric problem:
%What are the curves of given length connecting two points $P_1$ and
%$P_2$ in the $x$, $z$-plane which have the lowest center of gravity?
It turns out that the catenary describes also the equilibrium position of a heavy chain. In fact, it minimizes potential energy under
the influence of gravity force, in other words it has the lowest center of gravity.
The isoperimetric problem to determine  a surface having the
lowest center of gravity was dealt with by R. Buhme, S. Hildebrandt and E. Tausch in \cite{domes}. They also show in this paper that the importance of such surfaces is also in the construction of {\it "perfect domes"}. According to U. Dierkes in \cite{DRK2}, the mean curvature equation describes the shape of a hanging roof and in a more general context, membranes under their own weight, known as "heavy surfaces of minimal surface type". This was studied by Lagrange and Poisson and is still of interest in modern architectural design, particularly in the context of designing tension structures. 
The analogue for the one-dimensional problem is due to Gauss.

The notion of $a$-singular minimal surfaces in $\mathbb{R}^3$ is a generalization of the two-dimensional analogue of the catenary, in which case $a=1$. 
%For compact $a$-singular minimal surfaces in $\mathbb{R}^3$ with $a\neq0$ see \cite{LR}.

The following theorem establishes the relationship between the class of minimal $n$-dimensional $(n-2)$-umbilic submanifolds, the class of ($n-2$)-rotational submanifolds and the class of singular minimal surfaces. Here, we need to remind that a $k$-umbilic submanifold $f$ is called {\it generic} when it satisfies the following
$$
\dim[\ker(A_{\eta}-\mu I)(x)]=k, \,\, \text{for all} \, \, x\in M^{n},
$$
where $A_{\eta}$ is the shape operator associated to the unit smooth vector field $\eta\in N_fM$.

\begin{theorem}\label{sms0}
The following assertions hold:
\begin{enumerate} 
\item[(i)] Any minimal $(n-2)$-umbilic isometric immersion $f\colon M^n\to \mathbb{R}^{n+p},\, n\geq 4$, which is generic is an $(n-2)$-rotational submanifold.
\item[(ii)] An $(n-2)$-rotational submanifold $f\colon M^n\to \mathbb{R}^{n+p}$, $n\geq3$, is minimal if and only if its profile $g\colon L^2\to\mathbb{R}^{p+2}$ is an $a$-singular minimal surface with $a=n-2$. 
\end{enumerate}
\end{theorem}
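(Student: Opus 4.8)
The plan is to establish the two assertions separately, treating the computational statement (ii) first in order to fix the relevant formulas, and then using the resulting picture to organize the structural argument for (i).

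For (ii), I would start from the explicit description of an $(n-2)$-rotational submanifold as an isometric immersion of the warped product $L^2\times_\phi\Sp^{n-2}$. Writing $\mathbb{R}^{n+p}=\mathbb{R}^{p+1}\oplus\mathbb{R}^{n-1}$ and the profile as $g(x)=\bar g(x)+\phi(x)e$ with $\bar g(x)\in\mathbb{R}^{p+1}$, $e$ the unit density vector and $\phi=\langle g,e\rangle>0$, the rotational immersion reads $f(x,\theta)=\bar g(x)+\phi(x)\theta$ with $\theta\in\Sp^{n-2}\subset\mathbb{R}^{n-1}$. The first observation is a dimension count showing that $N_fM$ coincides, along the base slice, with the rotated normal bundle $N_gL$; hence the trace of $\alpha^f$ over the base directions equals $2\h_g$. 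The second observation is that each of the $n-2$ orthonormal fiber directions is umbilical and contributes the mean-curvature term of the round sphere $\phi\,\Sp^{n-2}$, whose normal part is $-(1/\phi)e^\perp$. Summing, $\operatorname{trace}\alpha^f=2\h_g-\tfrac{n-2}{\phi}e^\perp$, so $f$ is minimal if and only if $2\h_g=(n-2)e^\perp/\phi$, which is precisely the $(n-2)$-singular minimal surface equation. This direction is routine once the warped-product framing and the normal bundle identification are in place, and it covers the range $n\geq3$.

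For (i), the goal is to recover the rotational (warped product) structure from the umbilical distribution. By genericity, $\mathcal{U}=\ker(A_\eta-\mu I)$ has rank $n-2$, so $\mathcal{U}^\perp$ has rank $2$, and on $\mathcal{U}$ one has $\alpha^f(X,Y)=\mu\langle X,Y\rangle\eta$; minimality forces $\operatorname{trace}_{\mathcal{U}^\perp}\alpha^f=-(n-2)\mu\eta$. I would then exploit the Codazzi equation, splitting the arguments between $\mathcal{U}$ and $\mathcal{U}^\perp$, to establish the two distributional facts needed: that $\mathcal{U}$ is integrable with spherical leaves, that is, umbilical leaves whose mean-curvature normal is parallel along them in the normal connection of the leaves; and that $\mathcal{U}^\perp$ is integrable and totally geodesic in $M$. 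Concretely, testing Codazzi on triples drawn from $\mathcal{U}$ and $\mathcal{U}^\perp$ should show that $\mu$ is constant along each leaf of $\mathcal{U}$, that $X\mapsto\nabla_X(\mu\eta)$ has no component tangent to the leaves, and that the leaves of $\mathcal{U}^\perp$ carry no second fundamental form in the $\mathcal{U}$ direction. Once sphericity of $\mathcal{U}$ and total geodesy of $\mathcal{U}^\perp$ are in hand, a Hiepko-type warped product decomposition together with the corresponding Nölker decomposition of the immersion identifies $f$ locally with a warped product immersion of $L^2\times_\phi\Sp^{n-2}$ whose fibers are round $(n-2)$-spheres; the fiber factor is therefore the standard inclusion, and $f$ is an $(n-2)$-rotational submanifold over the profile $g$ obtained by restricting $f$ to a leaf of $\mathcal{U}^\perp$, with warping $\phi=\langle g,e\rangle$.

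The principal obstacle lies in (i), namely upgrading the mere umbilicity of $\mathcal{U}$ to sphericity and proving that $\mathcal{U}^\perp$ is totally geodesic; both are exactly where minimality and genericity enter, through a careful bookkeeping of the Codazzi equations restricted to the two distributions. The hypothesis $n\geq4$ guarantees $\operatorname{rank}\mathcal{U}=n-2\geq2$, which is what makes this Codazzi argument for sphericity effective. A secondary point requiring care is verifying that the profile lands in a fixed $(p+2)$-dimensional subspace $\mathbb{R}^{p+2}$, so that the ambient rotation group is $\mathrm{SO}(n-1)$ acting on the complementary $\mathbb{R}^{n-1}$; this should follow from the sphericity of $\mathcal{U}$ together with the fact that the normal bundle of $f$ has rank $p$.
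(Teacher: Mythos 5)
Your treatment of part (ii) is correct and is essentially the paper's own argument: the paper also writes $M^n=L^2\times_\phi\mathbb{S}^{n-2}$, identifies $N_gL$ with $N_fM$ by the rotation (a bundle isometry $\mathcal{P}$), computes the shape operators block by block, and obtains $n\mathcal{H}_f=\mathcal{P}\bigl(2\mathcal{H}_g-\tfrac{n-2}{\phi}e^\perp\bigr)$, which is your trace identity.

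For part (i) the paper does something you could not have guessed: it gives no computation at all, deducing the statement as "an immediate consequence of Theorem 2 in \cite{DF1}, due to the minimality of $f$." Your proposal is therefore an outline of a from-scratch proof of that cited classification, and while the architecture (Codazzi analysis $\Rightarrow$ $\mathcal{U}$ spherical and $\mathcal{U}^\perp$ totally geodesic $\Rightarrow$ Hiepko's intrinsic warped-product decomposition $\Rightarrow$ N\"olker's theorem, using that $\alpha^f(X,Z)=\mu\langle X,Z\rangle\eta=0$ for $X\in\mathcal{U}$, $Z\in\mathcal{U}^\perp$, so the second fundamental form is adapted $\Rightarrow$ umbilical fibers are round spheres) is the standard and correct strategy, the decisive steps are only asserted ("testing Codazzi \dots should show that \dots"). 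This is a genuine gap, not a stylistic one: a generic $(n-2)$-umbilic submanifold that is \emph{not} minimal is in general \emph{not} rotational (there are cone-like and canal-type examples), so sphericity of $\mathcal{U}$ and, above all, total geodesy of $\mathcal{U}^\perp$ cannot follow from Codazzi bookkeeping alone; one must exhibit concretely how the trace condition $\operatorname{trace}_{\mathcal{U}^\perp}\alpha^f=-(n-2)\mu\eta$ is injected into the Codazzi identities to kill the $\mathcal{U}$-component of $\nabla_ZW$ for $Z,W\in\mathcal{U}^\perp$ and to make $\mu\eta$ parallel along the leaves. You correctly flag that this is "exactly where minimality and genericity enter" and that $n\geq4$ gives $\operatorname{rank}\mathcal{U}\geq2$, but flagging the crux is not the same as resolving it; the same applies to your final point about the profile lying in a fixed $\mathbb{R}^{p+2}$ (equivalently, that the affine hulls of the leaf-spheres are parallel), which is part of the conclusion of the N\"olker-type step and also needs proof. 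As written, part (i) of your proposal is a credible plan whose central claims remain unverified; either carry out the Codazzi computations in full or, as the paper does, invoke the known classification of generic $(n-2)$-umbilic submanifolds.
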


\medskip

The relationship established in the above theorem has been proven to be essential in our research since it allows us to obtain a parametric description of the class of minimal $n$-dimensional ($n-2$)-umbilic submanifolds through the study of $(n-2)$-singular minimal surfaces. 
To this aim, it is crucial to prove that singular minimal surfaces are indeed minimal surfaces in the Euclidean space endowed with a conformal singular metric. This justifies the terminology of singular minimal surfaces. More precisely, we prove the following theorem.

\begin{theorem}\label{Ilamnen}
An isometric immersion $g$ of a two-dimensional Riemanninan manifold $L^2$ into 
$(\mathbb{R}_+^{p+2},\<\cdot,\cdot\>)$, where $\mathbb{R}_+^{p+2}=\{x\in\mathbb{R}^{p+2}/\sigma(x)>0\}$, $\sigma(x)=\<x,e\>$, and $e$ is a unit vector,
is $a$-singular minimal if and only if $g$ is a minimal surface, viewed as an isometric immersion into the Riemannian manifold $(\mathbb{R}^{p+2}_{+},\<\cdot,\cdot\>_{a})$, endowed with the metric $\<\cdot,\cdot\>_{a}=\sigma^a\<\cdot,\cdot\>$, being $\<\cdot,\cdot\>$ the flat Euclidean metric.
\end{theorem}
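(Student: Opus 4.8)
The plan is to compute the mean curvature vector of $g$ with respect to the conformally rescaled metric $\langle\cdot,\cdot\rangle_a=\sigma^a\langle\cdot,\cdot\rangle$ and to show directly that it vanishes precisely when the $a$-singular minimal condition $2\mathcal{H}_g=a\,e^\perp/\sigma$ holds. The key observation is that conformal changes of the ambient metric transform the second fundamental form and mean curvature in a completely explicit way, governed by the gradient of the conformal factor. Since the immersion $g$ and its tangent/normal splitting are unchanged as maps — only the metric is rescaled — the underlying geometry is controlled by a single scalar, $\log\sigma^a=a\log\sigma$, whose Euclidean gradient is $a\,e/\sigma$.

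First I would recall the standard formula relating the Levi-Civita connections of two conformally related metrics $\tilde g=e^{2\psi}g$ on the ambient $\mathbb{R}^{p+2}_+$, namely
\begin{equation}
\tilde\nabla_XY=\nabla_XY+X(\psi)Y+Y(\psi)X-\langle X,Y\rangle\,\mathrm{grad}\,\psi,
\end{equation}
where here $\psi=\tfrac{a}{2}\log\sigma$ so that $e^{2\psi}=\sigma^a$, and I would note that the Euclidean gradient of $\psi$ is $\mathrm{grad}\,\psi=\tfrac{a}{2}\,e/\sigma$. The next step is to substitute tangent vectors $X,Y\in TL$ into this formula and project onto the normal bundle to read off the second fundamental form $\tilde\alpha$ of $g$ in the new metric in terms of the Euclidean one $\alpha$. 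Because the tangential terms $X(\psi)Y+Y(\psi)X$ contribute nothing to the normal component, one gets the clean relation
\begin{equation}
\tilde\alpha(X,Y)=\alpha(X,Y)-\langle X,Y\rangle\,(\mathrm{grad}\,\psi)^\perp,
\end{equation}
where $(\cdot)^\perp$ denotes the Euclidean normal projection. Tracing over an orthonormal frame for $g$ (orthonormal with respect to the original metric, which suffices since the conformal factor cancels in the normalized mean curvature) and dividing by $2$ then yields
\begin{equation}
\tilde{\mathcal{H}}_g=\mathcal{H}_g-(\mathrm{grad}\,\psi)^\perp=\mathcal{H}_g-\frac{a}{2}\,\frac{e^\perp}{\sigma}.
\end{equation}

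From this identity the equivalence is immediate: $\tilde{\mathcal{H}}_g=0$ if and only if $2\mathcal{H}_g=a\,e^\perp/\sigma$, which is exactly the defining condition for $g$ to be $a$-singular minimal with density vector $e$ and $\phi=\sigma=\langle g,e\rangle$. I expect the main obstacle to be bookkeeping rather than conceptual: one must be careful that the mean curvature vector is the trace of the second fundamental form with respect to the relevant induced metric, and that the factor $\sigma^a$ rescaling the induced metric on $L^2$ cancels correctly when one normalizes the trace, so that the normal-bundle identity above holds with the stated constant. A secondary point worth verifying explicitly is that the normal bundles of $g$ in the two metrics coincide as subspaces (they do, since conformal changes preserve orthogonality), so that the projection $(\cdot)^\perp$ is unambiguous; once this is checked the computation is purely algebraic and the stated constant $a$ drops out without extra work.
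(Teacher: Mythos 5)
Your argument is correct and follows essentially the same route as the paper: both rest on the conformal-change relation $\tilde\alpha(X,Y)=\alpha(X,Y)-\frac{a}{2\sigma}\langle X,Y\rangle e^\perp$ (the paper cites it from Dajczer--Tojeiro, you derive it from the transformation of the Levi-Civita connection under $\langle\cdot,\cdot\rangle\mapsto\sigma^a\langle\cdot,\cdot\rangle$) and then trace. One small correction: the conformal factor does not actually cancel in the trace --- an orthonormal frame for the induced metric $\sigma^a\langle\cdot,\cdot\rangle$ is $\sigma^{-a/2}$ times one for the Euclidean induced metric, so the correct identity is $2\tilde{\mathcal{H}}_g=\sigma^{-a}\bigl(2\mathcal{H}_g-\frac{a}{\sigma}e^\perp\bigr)$ as in the paper, but the extra positive factor $\sigma^{-a}$ is harmless for the stated equivalence.
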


The metric $\<\cdot,\cdot\>_{a}$ given above first appeared in a work by  U. Dierkes \cite{DRK2}.

Focusing on this special class of surfaces in the Euclidean space, we provide a method, similar to the Weierstrass parametrization, that parametrizes locally all substantial $(n-2)$-singular minimal surfaces. Being substantial means that the codimension cannot be reduced.

\begin{theorem}\label{WT0}
Let  $g\colon (L^2,\<\cdot,\cdot\>)\to \mathbb{R}^{p+2}_+,p\geq1,$ be a substantial ($n-2$)-singular minimal surface with density vector $e$, where $L^2$ is an oriented two-dimensional Riemannian manifold. Then, for any simply connected complex chart $(U, z)$ of $(L^2,\<\cdot,\cdot\>)$, there exists a non-holomorphic map $G=\sum_{i=1}^p G_ie_i\colon U\to\mathbb{C}^p$, with $G_p\neq0$, where $\{e_1, \dots, e_{p-1}, e_p=e\}$ is an orthonormal basis of $\mathbb{C}^p$, that satisfies: 
\begin{equation}\label{eq0}
G_{\bar{z}z}=-\frac{n}{n-2}\frac{||G_{\bar{z}}||^2}{||F(G)||^2}G_pF(G)+\frac{||G_{\bar{z}}||^2}{||F(G)||^2}\<G,G\>\bar{G}_pF(G)+\frac{\<G_{\bar{z}},\overline{F(G)}\>}{||F(G)||^2}(F(G))_z
\end{equation}
 and 
 \begin{equation}\label{cond0}
 G_{\bar{z}}\wedge F(G)=0,  
\end{equation}
 where $F(G)\neq0$ and $F\colon\mathbb{C}^p\to\mathbb{C}^p$ is given by  
$$
F(z)=\left(\frac{1+|\<z,z\>|^2}{2}+||z||^2\right)e_p-z_p\big(\<\bar{z},\bar{z}\>z+\bar{z}\big),\, \quad z=(z_1,\cdots, z_p)\in\mathbb{C}^p.
$$
Moreover, the immersion is given by

\begin{multline}\label{par0}
g={\rm Re}\Big(\int\frac{\<\bar{G}_z,F(G)\>}{||F(G)||^2}(1-\<G,G\>)hdz, \mathsf{i}\int\frac{\<\bar{G}_z,F(G)\>}{||F(G)||^2} (1+\<G,G\>)hdz,\\ 2\int \frac{\<\bar{G}_z,F(G)\>}{||F(G)||^2} G_1hdz,\dots, 2\int \frac{\<\bar{G}_z,F(G)\>}{||F(G)||^2} G_{p-1}hdz,(n-2)h\Big),
\end{multline}
where 
$$
h=\frac{1}{n-2}\left(\frac{n}{2}\right)^\frac{2}{n}e^{\frac{2}{n-2}{\rm Re}\int\frac{\<\bar{G}_z,F(G)\>}{||F(G)^2||}G_pdz}.
$$

Conversely, consider a non-holomorphic map $G=(G_1,\dots, G_p)\colon U\to\mathbb{C}^p$, such that $G_p\neq0$ and $F(G)\neq0$ on a simply connected open subset $U\subset\mathbb{C}$, that satisfies (\ref{eq0}) and (\ref{cond0}). Then, the surface $g\colon L^2\to \mathbb{R}^{p+2},p\geq1,$ parametrized by (\ref{par0}) is an ($n-2$)-singular minimal surface with density vector $e=(0,\dots, 0,1).$
\end{theorem}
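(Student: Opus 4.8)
The plan is to realize (\ref{par0}) as a \emph{twisted Weierstrass (isotropic-curve) representation}: for a conformal immersion the differential $g_z$ is a null vector, so it can be written through the standard inverse–stereographic parametrization of the quadric of isotropic directions, and the map $G$ is exactly the resulting stereographic Gauss datum. Concretely, I would fix a conformal chart $(U,z)$ of $(L^2,\langle\cdot,\cdot\rangle)$, so that $\langle g_z,g_z\rangle=0$ and $\langle g_z,g_{\bar z}\rangle>0$, and set $g_z=\tfrac{\rho}{2}\Psi(G)$, where $\Psi(G)=(1-\langle G,G\rangle,\ \mathsf{i}(1+\langle G,G\rangle),\ 2G_1,\dots,2G_p)$ is isotropic by construction and $\rho$ is a nowhere-zero complex function. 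Substantiality is what guarantees that this Gauss datum is nondegenerate, i.e.\ that $G$ is non-holomorphic, $G_p\neq0$ and $F(G)\neq0$; note $G_p\neq0$ is equivalent to $\phi_z=\rho G_p\not\equiv0$, that is, to the density function $\phi=\langle g,e\rangle$ being non-constant.

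By Theorem \ref{Ilamnen} (or by a direct computation of the mean curvature vector in the conformal chart) the $a$-singular minimal condition with $a=n-2$ is equivalent to $g_{z\bar z}=\tfrac{a\langle g_z,g_{\bar z}\rangle}{2\phi}\,e^{\perp}$, whose right-hand side is the normal projection of the density vector. I would substitute $g_z=\tfrac{\rho}{2}\Psi(G)$ here and expand $2g_{z\bar z}=\rho_{\bar z}\Psi(G)+\rho\,\Psi(G)_{\bar z}$, obtaining a single identity in $\mathbb{C}^{p+2}$. Decomposing it along $\Psi(G)$, along $\overline{\Psi(G)}$, and along the remaining (normal) directions yields three pieces: the $\Psi(G)$–component is an identity (a consistency check), the $\overline{\Psi(G)}$–component is a first-order transport equation for $\rho$, and the normal part, after inverting the differential of $\Psi$ on the coordinates $3,\dots,p+2$, expresses $G_{\bar z}$ as a scalar multiple of one fixed vector which turns out to be exactly $F(G)$. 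This is where $F$ comes from: it is the chart representative of $e^{\perp}$, and the proportionality $G_{\bar z}\wedge F(G)=0$ is precisely (\ref{cond0}). Integrating the transport relations then produces the exponential factor $h$, the identity $\rho=\dfrac{\langle\bar{G}_z,F(G)\rangle}{\|F(G)\|^2}h$, and $\phi=(n-2)h$ from the $(p+2)$-coordinate.

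To obtain (\ref{eq0}) I would differentiate $G_{\bar z}=\gamma F(G)$, with $\gamma=\langle G_{\bar z},\overline{F(G)}\rangle/\|F(G)\|^2$, in $z$. Writing $G_{z\bar z}=\gamma_z F(G)+\gamma\,F(G)_z$ and using $\|G_{\bar z}\|^2=|\gamma|^2\|F(G)\|^2$ together with $\langle G_{\bar z},\overline{F(G)}\rangle=\gamma\|F(G)\|^2$, the term $\gamma\,F(G)_z$ is exactly the last summand of (\ref{eq0}), so that (\ref{eq0}) collapses to the clean scalar equation $\gamma_z=|\gamma|^2\big(\langle G,G\rangle\bar{G}_p-\tfrac{n}{n-2}G_p\big)$; the coefficient $\tfrac{n}{n-2}=\tfrac{a+2}{a}$ is produced by combining the $\rho$–transport equation with $\phi_z=\rho G_p$. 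Verifying that the geometric $\gamma$ satisfies this scalar equation is a short computation, after which $g=\mathrm{Re}\!\int 2g_z\,dz=\mathrm{Re}\!\int\rho\,\Psi(G)\,dz$ is precisely (\ref{par0}), the last coordinate being read off directly from $h$.

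For the converse I would reverse these steps: given $G$ satisfying (\ref{eq0}) and (\ref{cond0}), set $h$ as stated, $\rho=\langle\bar{G}_z,F(G)\rangle\|F(G)\|^{-2}h$, and the candidate $\Phi=\rho\,\Psi(G)$ for $2g_z$, so that (\ref{par0}) integrates the first $p+1$ entries of $\Phi$ and fixes the last coordinate to $(n-2)h$. Three points must be verified. First, $g$ is conformal, $\langle g_z,g_z\rangle=0$; since $(g_z)_{p+2}=(n-2)h_z=\rho G_p$ (forced by the definition of $h$), this follows from the isotropy identity $\sum_{k=1}^{p+1}\Phi_k^2=-4\rho^2G_p^2$. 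Second, and this is the step I expect to be the main obstacle, one must show that the real $1$-form $\mathrm{Re}(\Phi\,dz)$ is closed, equivalently that $\Phi_{\bar z}$ is real, so that $g=\mathrm{Re}\!\int\Phi\,dz$ is well defined on the simply connected $U$; this is exactly the content that (\ref{eq0})–(\ref{cond0}) are engineered to supply, and it is where the second-order equation is genuinely used. Third, a computation of $g_{z\bar z}$ then gives $2\mathcal{H}_g=(n-2)e^{\perp}/\phi$ with $\phi=\langle g,e\rangle=(n-2)h>0$, so $g$ is an $(n-2)$-singular minimal surface with density vector $e=(0,\dots,0,1)$. Beyond the closedness check, the remaining delicate tasks are extracting the nondegeneracy claims from substantiality and tracking the several conformal factors ($\rho$, $h$, $\|\Psi(G)\|^2=4\big(\tfrac{1+|\langle G,G\rangle|^2}{2}+\|G\|^2\big)$) that identify $F(G)$ and pin down the constant $\tfrac{n}{n-2}$.
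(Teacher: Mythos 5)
Your proposal is correct in substance and arrives at the same structural identities as the paper, but by a genuinely different route. The paper never works with the Euclidean equation $2g_{z\bar z}=\frac{(n-2)\<g_z,g_{\bar z}\>}{\phi}e^{\perp}$ directly: it first transfers the problem through the conformal isometry $\tau\colon(\mathbb{R}^{p+2}_+,\<\cdot,\cdot\>_{n-2})\to(\mathbb{R}^{p+1}\times\mathbb{R}_+,\widetilde{\<\cdot,\cdot\>})$ onto a genuine minimal surface $Y=\tau\circ g$ in a warped product, computes the Christoffel symbols of that metric, and encodes minimality in the modified Weierstrass data $\varphi_i=e^{\eta/2}y_{i,z}$ (Lemmas \ref{fi} and \ref{fi2}); only then does it introduce $\Psi=\varphi_1-\mathsf{i}\varphi_2$ and $G_i=\varphi_{i+2}/\Psi$, which is the same isotropic/stereographic datum as your $g_z=\tfrac{\rho}{2}\Psi(G)$ (indeed $(\varphi_1,\dots,\varphi_{p+2})=\phi^{(n-2)/2}g_z$). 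Your direct decomposition of the singular-minimal equation in the flat chart is more elementary and self-contained, and your reduction of \eqref{eq0} to the scalar equation $\gamma_z=|\gamma|^2\big(\<G,G\>\bar G_p-\tfrac{n}{n-2}G_p\big)$ for $\gamma=\<G_{\bar z},\overline{F(G)}\>/\|F(G)\|^2$ is exactly right (one checks $\gamma=\tfrac{(n-2)\bar\rho}{2\phi}$, $\bar\rho_z=\tfrac{(n-2)|\rho|^2}{2\phi}(\<\bar G,\bar G\>\bar G_p-G_p)^{-}\!$, $\phi_z=\rho G_p$, which reproduce the stated coefficient). What the paper's detour buys is conceptual transparency -- the exponent $2/n$ and the factor $h\sim\omega^{2/n}$ fall out of the explicit isometry, and the closedness argument in the converse is cleanly localized in the $1$-form $\Omega$ -- at the price of the Christoffel-symbol computation. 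Two places where your sketch is thinner than it should be: first, $F(G)$ is \emph{not} literally the chart representative of $e^{\perp}$; its last term is $G_p\<\bar G,\bar G\>G$ rather than $\bar G_pG$, the discrepancy being the tangential correction $-(\rho_{\bar z}/\rho)G$ absorbed when you eliminate $\rho_{\bar z}$ -- your operative claim ($G_{\bar z}$ is a scalar multiple of $F(G)$) is nevertheless correct. Second, the nondegeneracy assertion ``substantiality $\Rightarrow F(G)\neq0$'' is not automatic: in the paper this is a separate contradiction argument (assuming $F(G)=0$ forces $G_{\bar z}=0$, then $|\<G,G\>|=1$, then $G$ constant, then a linear relation among coordinates of $g$), and you would need to supply an analogue of it.
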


It is worth noticing that in the lowest allowed codimension $p=1$, the condition \eqref{cond0} in the above theorem is trivially satisfied.  
In fact, for this case, the Weierstass type representation given in the above theorem was already given in \cite{MM}.   
Minimal hypersurfaces in the Euclidean space $\mathbb{R}^{n+1}$ that have a nowhere vanishing principal curvature of multiplicity $n-2$ are clearly generic ($n-2$)-umbilic submanifolds. Combining Theorem \ref{sms0} with Theorem \ref{WT0} for $p=1$, we obtain the following local result. It is intresting to point out that in contrast to the case studied by D. E. Blair, where there is a unique minimal ($n-1$)-umbilic hypersurface, our result demonstrates that there is a rich class of minimal ($n-2$)-umbilic hypersurfaces that are geometrically distinct.

\begin{corollary}
Any minimal hypersurface in the Euclidean space $\mathbb{R}^{n+1}$, $n\geq4$, that has a nowhere vanishing principal curvature of multiplicity $n-2$, is an ($n-2$)-rotational hypersurface whose profile is an ($n-2$)-singular minimal surface in $\mathbb{R}^3$ which is parametrically given by the above theorem.
\end{corollary}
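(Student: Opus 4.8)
The plan is to realize the corollary as a direct concatenation of Theorems \ref{sms0} and \ref{WT0}, the only genuine work being the verification that the hypotheses of each theorem are met before it is invoked.

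First I would check that a minimal hypersurface $f\colon M^n\to\mathbb{R}^{n+1}$ carrying a nowhere vanishing principal curvature $\mu$ of multiplicity $n-2$ is a \emph{generic} $(n-2)$-umbilic submanifold. Let $\eta$ be a local unit normal and $A_\eta$ the associated shape operator, so that $\alpha^f(X,Y)=\langle A_\eta X,Y\rangle\eta$. The eigendistribution $\mathcal{U}=\ker(A_\eta-\mu I)$ has rank $n-2$, and for $X,Y\in\mathcal{U}$ one has $\alpha^f(X,Y)=\mu\langle X,Y\rangle\eta$; hence $\mathcal{U}$ is a totally umbilic distribution of rank $n-2$ with umbilic direction $\eta$. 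Since $\mu$ is nowhere vanishing it has constant sign on the connected manifold $M$, so after reversing $\eta$ if necessary we may take $\mu>0$, matching the sign convention of the definition. Because the multiplicity of $\mu$ is exactly $n-2$, the two remaining principal curvatures differ from $\mu$; consequently no eigendistribution of $A_\eta$ has dimension exceeding $n-2$, so $\mathcal{U}$ is maximal and $\dim\ker(A_\eta-\mu I)=n-2$, which is precisely the genericity condition with $k=n-2$.

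Next, since $n\geq4$, Theorem \ref{sms0}(i) applies and shows that $f$ is an $(n-2)$-rotational submanifold; as the codimension here is $p=1$, this is an $(n-2)$-rotational hypersurface of $\mathbb{R}^{n+1}$ over a profile $g\colon L^2\to\mathbb{R}^{3}$. Applying the ``only if'' direction of Theorem \ref{sms0}(ii) to this minimal rotational hypersurface then yields that $g$ is an $a$-singular minimal surface with $a=n-2$, that is, an $(n-2)$-singular minimal surface in $\mathbb{R}^{3}$.

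Finally I would invoke Theorem \ref{WT0} with $p=1$ to obtain the asserted parametric description of $g$; as remarked immediately after that theorem, the compatibility condition \eqref{cond0} is vacuous when $p=1$, so only the substantiality hypothesis remains to be secured, and this is the step I expect to be the main obstacle. The plan is to argue that the nowhere vanishing of $\mu$ forces $g$ to be non-planar. Under the rotational construction the umbilic principal curvature $\mu$ is governed by the normal component $e^\perp$ of the density vector through the singular minimal relation $2\h_g=(n-2)e^\perp/\phi$, so $\mu$ never vanishing translates into $e^\perp\neq0$, hence $\h_g\neq0$, everywhere. A profile contained in an affine plane of $\mathbb{R}^{3}$ would be totally geodesic and thus satisfy $\h_g\equiv0$, which is excluded; therefore $g$ is substantial and Theorem \ref{WT0} delivers the stated Weierstrass type parametrization, completing the proof.
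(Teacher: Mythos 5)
Your proposal is correct and follows the same route the paper intends: the paper states this corollary without proof, merely remarking that such hypersurfaces are ``clearly generic'' $(n-2)$-umbilic submanifolds and that the result follows by combining Theorem \ref{sms0} with Theorem \ref{WT0} for $p=1$. The two hypothesis checks you supply---that genericity is automatic in the hypersurface case, and that the profile is substantial in $\mathbb{R}^3$ because the nowhere vanishing of $\mu$ forces $e^\perp\neq0$ and hence $\h_g\neq0$, ruling out a planar profile---are exactly the details the paper leaves implicit, and both are sound.
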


Additionally, we obtain the following local parametrization of flat ruled $(n-2)$-singular minimal surfaces.

\begin{theorem}\label{frsms}
The only substantial $(n-2)$-singular minimal flat ruled surfaces $g\colon L^2\to \mathbb{R}^{p+2}$ are the cylinders over a plane curve $c$, which is not a circle, and its curvature $k$ satisfies the differential equation
\begin{equation*}
\dot{k}^2 + \left(\frac{n-1}{n-2}\right)^2k^4-\left(\frac{n-1}{\lambda}\right)^2k^{2n/(n-1)}=0,
\end{equation*}
where $\lambda$ is a nonzero constant. Conversely, the cylinder over a plane curve whose curvature is a nonconstant solution of the above ODE is an ($n-2$)-singular minimal surface.
 \end{theorem}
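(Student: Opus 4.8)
The plan is to use that flatness forces a rank-one relative nullity whose leaves are the rulings, and then to extract from the singular minimal equation enough rigidity to pin down the ruling direction. First I would parametrize the surface as $g(s,t)=c(s)+t\delta(s)$ with $|\delta|=1$. Since the rulings are straight lines they are asymptotic, so $\alpha(\delta,\delta)=0$, and by the Gauss equation $K=-\|\alpha(e_1,\delta)\|^2$, where $e_1\perp\delta$ is the remaining unit tangent; flatness $K=0$ then gives $\alpha(e_1,\delta)=0$ as well. Hence $A_\xi\delta=0$ for every normal $\xi$, so $\delta$ spans the relative nullity and the tangent plane is constant along each ruling. Choosing $c$ orthogonal to the rulings and parametrized by arc length, developability reads $\delta'=\ell\,c'$ for a function $\ell$, and a short computation gives $g_s=(1+t\ell)c'$ and $\h_g=(c'')^\perp/(2(1+t\ell))$, where $(\cdot)^\perp$ denotes the normal projection, which is independent of $t$ along a ruling.

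Next I would impose the singular minimal condition $2\h_g=(n-2)e^\perp/\phi$ with $\phi=\langle c,e\rangle+t\langle\delta,e\rangle$. Because the normal space depends on $s$ only, equating the two sides as functions of $t$ along a fixed ruling forces $e^\perp$ to be parallel to $(c'')^\perp$, together with the relations $e^\perp=\tfrac{\langle c,e\rangle}{n-2}(c'')^\perp$ and $\langle\delta,e\rangle=\ell\langle c,e\rangle$. Differentiating the last identity and using $\phi>0$ yields $\ell'=0$, which already excludes the tangent developables, whose ruling field has nonconstant $\ell$ in this gauge. The remaining and main difficulty is to rule out the cones $\ell=\mathrm{const}\neq0$: in that case the vertex lies on the hyperplane $\{\sigma=0\}$, so $\phi$ changes sign along each complete ruling, contradicting $\phi>0$ and forcing $\ell=0$, i.e.\ $\delta$ constant and $g$ a cylinder. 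I expect this cylindricity step to be the crux, since locally a cone does satisfy every pointwise equation (e.g.\ a circular cone of half-angle $\beta$ with $\cot^2\beta=n-2$ is $(n-2)$-singular minimal), and only the positivity of $\phi$ along the full rulings eliminates it.

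With $\delta$ constant, $c$ lies in $\delta^\perp$ and the equation becomes $c''=(n-2)e^\perp/\langle c,e\rangle$ with $c''\in\mathrm{span}\{c',e\}$; the standard argument that this span is an invariant plane shows $c$ is a plane curve lying in a plane through the direction $e$, whence substantiality gives $p=1$. Working in that plane with tangent angle $\theta$ and height $y=\langle c,e\rangle$, the equation reduces to $\theta'=k=(n-2)\cos\theta/y$ and $y'=\sin\theta$, which admit the first integral $y^{n-2}\cos\theta=\mathrm{const}$. Finally I would differentiate $k$ along $c$, substitute $y'$ and the first integral to eliminate $\theta$ and $y$ in favour of $k$, and thereby obtain the stated ODE, reading off $\lambda$ from the integration constant; a constant $k$ would make $c$ a circle, which the computation excludes for $n\ge4$, so $k$ is nonconstant. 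For the converse, starting from a nonconstant solution $k$ of the ODE one recovers $\theta$ and $y$ by quadratures, builds the plane curve $c$ and the cylinder over it, and verifies $2\h_g=(n-2)e^\perp/\phi$ directly, which amounts to retracing these computations in reverse.
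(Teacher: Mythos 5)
Your proposal is correct and follows the same overall strategy as the paper: parametrize the flat ruled surface by a directrix and a ruling field, use developability to split into the cylinder/cone/tangent-developable trichotomy, and in the cylinder case reduce the singular minimal condition to a first integral for the planar profile that yields the stated ODE (your $y^{n-2}\cos\theta=\mathrm{const}$ is exactly the paper's $k_1u^{n-1}=\mathrm{const}$, and the circle exclusion and the converse are handled identically). Where you genuinely diverge is in disposing of the non-cylindrical cases. You kill the tangent developables in one line by differentiating $\<\delta,e\>=\ell\<c,e\>$ to get $\ell'=0$, whereas the paper computes the mean curvature of $g(s,t)=c(s)+t\dot{c}(s)$ explicitly and reads off a contradiction from its $\dot{c}$-component; both work, yours is cleaner. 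For the cones the difference is substantive: the paper asserts $2\h_g=k_1N_1$ for $g(s,t)=tc(s)$ and derives an ``obvious'' contradiction from the $t$-dependence, but the correct mean curvature is $2\h_g=(\ddot{c}+c)/t$, whose $1/t$ factor cancels against $\phi=t\<c,e\>$, so the pointwise equation reduces to a $t$-independent ODE on the spherical directrix that does admit solutions --- your circular cone with $\cot^2\beta=n-2$ is a genuine one. Your observation that the cone satisfies every local equation and can only be excluded globally, via the vertex lying on $\{\sigma=0\}$ so that $\phi$ changes sign along complete rulings, is therefore not a stylistic variant but the argument that actually closes this case; it does, however, make explicit that the theorem needs the rulings to be complete lines (as in the paper's domain $I\times\mathbb{R}$), since a half-cone with $t>0$ is otherwise a legitimate substantial flat ruled $(n-2)$-singular minimal surface that is not a cylinder. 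You should also record the small justification that $e^\perp$ (equivalently $(\ddot{c})^\perp$) cannot vanish on an open set, since that would force the surface to be totally geodesic and hence non-substantial; with that noted, your argument is complete.
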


\medskip

Finally, we provide a global result on $(n-2)$-singular minimal surfaces. Namely, we prove that there are no complete $(n-2)$-singular minimal surfaces with Gaussian curvature bounded from bellow that are contained between two hyperplanes perpendicular to the density vector.

The paper is organized as follows: In Section 2, we recall some basic notions of the theory of submanifolds and give a brief overview of the background material needed for the rest of the paper.  Section 3 focuses on our study of minimal $n$-dimensional ($n-2$)-umbilic submanifolds. Since minimal generic $(n-2)$-umbilic submanifolds are indeed $(n-2)$-rotational submanifolds over ($n-2$)-singular minimal surfaces (Theorem \ref{sms0}), we need to turn our attention to $a$-singular minimal surfaces. We give the definition of $a$-singular minimal surfaces with density vector and we prove that they constitute the critical points of their energy potential for any variation. We prove one of our main results, namely Theorem \ref{Ilamnen}, that claims that a surface is an $a$-singular minimal surface if and only if it is a minimal surface in the Riemannian manifold $(\mathbb{R}_+^{p+2},\<\cdot,\cdot\>_a$). In Section 4, we prove Theorem \ref{sms0} that allows us to proceed our research on a local classification of minimal $(n-2)$-umbilic submanifolds through the local description of ($n-2$)-singular minimal surfaces. Additionally, we prove Theorem \ref{frsms} that provides a local parametrization of flat ruled $(n-2)$-singular minimal surfaces and we prove a result of global character, that is, the non-existence of complete ($n-2$)-singular minimal surfaces. In Section 5, we develop a method similar to Weierstass representation, in order to prove our main theorem, Theorem \ref{WT0} and thus achieve the local parametrization of all ($n-2$)-singular minimal surfaces.

\section{Preliminaries}

In this section, we recall some basic facts from the theory of \iis
that will be used throughout the paper.
 
\subsection{$k$-umbilic submanifolds}
  
An isometric immersion $f\colon M^n\to\R^{n+p}$ is called {\it $k$-umbilic} if it carries a maximal $k$-dimensional totally umbilic distribution $\mathcal{U}$. This means that there exists a smooth vector field $\eta\in N_fM$ of unit length and a positive function $\mu\in\mathcal{C}^\infty(M)$ so that:
$$
\mathcal{U}=\left\{X\in TM:\alpha^f(X,Y)=\mu \<X,Y\>\eta \;\;\text{for every}\;\;Y\in TM\right\}.
$$
It is known that the distribution $\mathcal U$ is integrable.
Additionally, a $k$-umbilic submanifold $f$ is called {\it generic} when it satisfies the following
$$
\dim[\ker(A_{\eta}-\mu I)(x)]=k, \,\, \text{for all} \, \, x\in M^{n},
$$
where $A_{\eta}$ is the shape operator associated to $\eta$ given by
$$
\<A_{\eta}X,Y\>=\<\alpha^f(X,Y),\eta\>, \,\  \text{for all}\,\,   X,Y\in TM.
$$
Observe that every $k$-umbilic hypersurface is trivially generic. 

\subsection{($n-2$)-rotational submanifolds}

An important class of $k$-umbilic sumbmanifolds with $k=n-2$ are the {\it $(n-2)$-rotational} submanifolds.
An $(n-2)$-rotational submanifold $f\colon M^n\to\mathbb{R}^{n+p}, n\geq 3,$ with rotation axis $\mathbb{R}^{p+1}$ over a surface  $g\colon L^2\to \mathbb{R}^{p+2}$ is the $n$-dimensional submanifold generated by the orbits of the points of $g(L)$ (disjoint from $\mathbb{R}^{p+1}$) under the action of the subgroup $SO(n-1)$ of $SO(n+p)$ which keeps pointwise $\mathbb{R}^{p+1}$ invariant. Any $(n-2)$-rotational submanifold $f\colon M^n\to\mathbb{R}^{n+p}, n\geq 3$, can be parametrized as follows. The manifold $M^n$ is isometric to (an open subset of) a warped product $ L^2\times_{\phi}\mathbb{S}^{n-2}(1)$ and there is an orthogonal splitting  $\mathbb{R}^{p+2}=\mathbb{R}^{p+1}\oplus span\{e\}$, $ ||e||=1$, such that the {\it profile} $g$ has the form $ g=(h,\phi e)$,  where $h\colon L^2\to\mathbb{R}^{p+1}$ and $\phi=\<g,e\>>0$. Then, 
$$
f(x,y)=\left(h(x),\phi(x) j(y)\right),
$$
where $j\colon\mathbb{S}^{n-2}\to\mathbb{R}^{n-1}$ is the inclusion map of the unit sphere.

\section{Singular minimal surfaces}

For a fixed vector $e\in\mathbb{R}^{p+2}$, we consider the half space $\mathbb{R}_+^{p+2}=\{x\in\mathbb{R}^{p+2}\colon\,\sigma(x)>0\}$ where  $\sigma\colon\mathbb{R}^{p+2}\to\mathbb{R}$ is the height function given by $\sigma(x)=\<x,e\>$, where $\<\cdot,\cdot\>$ denotes the standard inner product in $\mathbb{R}^{p+2}$.
\begin{definition}
An isometric immersion $g\colon (L^2,\<\cdot,\cdot\>)\to(\mathbb{R}^{p+2},\<\cdot,\cdot\>)$ is called {\it a-singular minimal surface} with density vector $e$, where $e$ is a unit vector and $a\in\mathbb{R},$ if and only if $g(L)\subset\mathbb{R}^{p+2}_{+}$ and its mean curvature vector field satisfies
$$
2\h_g=\frac{a}{\phi}e^{\perp},
$$
where $\phi=\sigma\circ g$ and $e^\perp$ is the orthogonal projection of $e$ onto the normal bundle $N_gL$ of $g$.

\end{definition}

Let $g\colon L^2\to \mathbb{R}^{p+2}$ be an isometric immersion such that $\phi=\<g,e\>>0,$ where $e$ is a fixed vector and let $G\colon (-\varepsilon,\varepsilon)\times L^2\to\mathbb{R}^{p+2}$ be a smooth variation of $g$. This means that the map $g_t\colon L^2\to\mathbb{R}^{p+2}$ given by $g_t(x)=G(t,x)$ is an immersion with $g_0=g$. For a given $a\in\mathbb{R}$ we define the {\it energy potential} of $g$ associated to the smooth variation and the vector $e$ as follows
$$
E _{a,e}(g)=\int_{L^2}\phi^adL_t,
$$
where $dL_t$ is the volume element of the metric induced by $g_t$. 

The following proposition provides a variational characterization of $a$-singular minimal surfaces.
\begin{proposition}
 Let $g\colon L^2\to \mathbb{R}^{p+2}$ be an isometric immersion with $\phi=\<g,e\>>0$ where $e$ is a fixed vector. For any compactly supported smooth variation $g_t$ of $g$, the first variation of the energy potential is given by:
 $$
\frac{d}{dt}|{_{t=0}}\left(E_{a,e}(g_t)\right)=\int_{L^2}\phi^{a-1}\<ae^{\perp}-2\phi \h_g,\eta\>dL,
$$
where $\eta\in\Gamma(N_gL)$ is the normal component of the variational vector field $\partial g_t/\partial t|_{t=0}$. 

Moreover, $g$ is a critical point of the energy potential for any compactly supported variation if and only if $g$ is an $a$-singular minimal surface with density vector $e$.
\end{proposition}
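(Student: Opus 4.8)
The plan is to perform a direct first-variation computation, keeping track of the two sources of $t$-dependence in $E_{a,e}(g_t)=\int_{L^2}\phi_t^a\,dL_t$, namely the height $\phi_t=\<g_t,e\>$ and the induced volume element $dL_t$. Writing $V=\partial g_t/\partial t|_{t=0}$ for the variational field, I would decompose it as $V=V^\top+\eta$ into its tangential and normal parts, so that $\eta\in\Gamma(N_gL)$ is the field appearing in the statement.

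First I would differentiate the height factor. Since $\phi_t=\<g_t,e\>$, we obtain $\frac{d}{dt}\big|_{t=0}\phi_t^a=a\phi^{a-1}\<V,e\>$. Decomposing $e=e^\top+e^\perp$ along $g$ and using that $e^\top=\grad\phi$ (because $X(\phi)=\<X,e\>=\<X,e^\top\>$ for every $X\in TL$), this contribution splits as $a\phi^{a-1}\<V^\top,\grad\phi\>+a\phi^{a-1}\<\eta,e^\perp\>$. Next I would invoke the classical first variation of the area element, $\frac{d}{dt}\big|_{t=0}dL_t=(\mathrm{div}_L V^\top-\<2\h_g,\eta\>)\,dL$, where $2\h_g=\mathrm{tr}\,\alpha^g$ is the trace of the second fundamental form. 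Adding the two contributions and using $\grad(\phi^a)=a\phi^{a-1}\grad\phi$, the integrand of $\frac{d}{dt}|_{t=0}E_{a,e}(g_t)$ becomes $\<V^\top,\grad(\phi^a)\>+\phi^a\,\mathrm{div}_L V^\top+a\phi^{a-1}\<\eta,e^\perp\>-\phi^a\<2\h_g,\eta\>$.

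The key step is to show that the purely tangential terms integrate to zero. For a compactly supported variation the divergence theorem gives $\int_{L^2}\phi^a\,\mathrm{div}_L V^\top\,dL=-\int_{L^2}\<\grad(\phi^a),V^\top\>\,dL$, which cancels $\int_{L^2}\<V^\top,\grad(\phi^a)\>\,dL$ exactly. Factoring $\phi^{a-1}$ out of what survives then yields
$$\frac{d}{dt}\Big|_{t=0}E_{a,e}(g_t)=\int_{L^2}\phi^{a-1}\<ae^\perp-2\phi\h_g,\eta\>\,dL,$$
the asserted formula. For the characterization, if $g$ is $a$-singular minimal then $ae^\perp-2\phi\h_g=0$ identically and the integral vanishes for every variation, so $g$ is critical; conversely, if $g$ is critical for all compactly supported variations, then since any compactly supported normal field $\eta$ is realized as the normal part of some variation, the fundamental lemma of the calculus of variations forces $\phi^{a-1}(ae^\perp-2\phi\h_g)=0$ pointwise, and since $\phi>0$ this is precisely $2\h_g=(a/\phi)e^\perp$. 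I do not anticipate a genuine obstacle: the only point demanding care is the integration by parts that annihilates the tangential terms, which is exactly where compact support of the variation is used.
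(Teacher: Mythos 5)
Your proposal is correct and follows essentially the same route as the paper: decompose the variational field into tangential and normal parts, differentiate the weight $\phi_t^a$ and the volume element separately (the paper cites Proposition 3.1 of Dajczer--Tojeiro for the latter), and observe that the tangential contributions cancel, after which the characterization follows from the fundamental lemma. Your write-up merely makes explicit the integration by parts against $\mathrm{div}\,V^\top$ that the paper leaves implicit.
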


\begin{proof}
Let $\partial g_t/\partial t|_{t=0}=dg(Z)+\eta$ be the composition of the variational vector field into its tangent and normal components.
By using Proposition 3.1 in \cite{da}  we have that
\begin{align*}
\frac{d}{dt}|{_{t=0}}\left(E_{a,e}(g)\right)&=\int_{L^2}\left(\frac{d}{dt}|{_{t=0}}\<g_t,e\>^adL+\<g,e\>^a\frac{d}{dt}|{_{t=0}}(dL_t)\right)\\
&=\int_{L^2}\left(a\phi^{a-1}\<dg(Z)+\eta,e\>dL+\phi^a(-2\<\h_g,\eta\>+\operatorname{div}ZdL)\right)\\
&=\int_{L^2}\phi^{a-1}\<ae^\perp-2\phi \h_g,\eta\>dL,
\end{align*}
and this completes the proof.
\qed
\end{proof}

\medskip

We now consider a half space $$\mathbb{R}_+^n=\{x\in\mathbb{R}^n\colon \sigma(x)=\<x,e\>>0\},$$ where $e$ is a unit vector, as an $n$-dimensional Riemannian manifold endowed with the conformal metric $$\<\cdot,\cdot\>_{a}=\sigma^{a}\<\cdot,\cdot\>, \, \, a\in\mathbb{R}.$$

\begin{theorem} \label{minimal}
An isometric immersion $g\colon (L^2,g^*\<\cdot,\cdot\>)\to(\mathbb{R}_+^{p+2},\<\cdot,\cdot\>)$
is $a$-singular minimal if and only if $g$ is a minimal, viewed as an isometric immersion into the Riemannian manifold  $(\mathbb{R}^{p+2}_{+},\<\cdot,\cdot\>_{a})$.
\end{theorem}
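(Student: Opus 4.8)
The plan is to compare the mean curvature vector of the immersion $g$ computed with respect to the two metrics $\<\cdot,\cdot\>$ and $\<\cdot,\cdot\>_a=\sigma^a\<\cdot,\cdot\>$ on the ambient half-space, and to show that the vanishing of the mean curvature in the conformal metric is exactly the singular minimal condition $2\h_g=(a/\phi)e^\perp$. Since $\<\cdot,\cdot\>_a$ is conformally flat, the natural tool is the standard transformation law for the second fundamental form (equivalently, for the connection and hence the mean curvature) under a conformal change of the ambient metric. First I would record that for a conformal change $\tilde{g}=e^{2\psi}\<\cdot,\cdot\>$ of the Euclidean metric, with $\psi=\tfrac{a}{2}\log\sigma$ so that $e^{2\psi}=\sigma^a$, the two Levi-Civita connections of $\R^{p+2}$ are related by the classical formula $\tilde\nabla_X Y=\nabla_X Y+X(\psi)Y+Y(\psi)X-\<X,Y\>\grad\psi$, where $\grad\psi$ is the Euclidean gradient. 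Since $\sigma(x)=\<x,e\>$ is linear, $\grad\sigma=e$ and hence $\grad\psi=\tfrac{a}{2}\,e/\sigma$.

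The key computation is to feed this connection relation into the Gauss formula for the immersion. Writing $\tilde\alpha$ for the second fundamental form of $g$ in the metric $\tilde g=\sigma^a\<\cdot,\cdot\>$ and $\alpha^g$ for the Euclidean one, the terms $X(\psi)Y+Y(\psi)X$ are tangent to $L^2$ and therefore do not contribute to the normal projection, while the term $-\<X,Y\>\grad\psi$ contributes $-\<X,Y\>(\grad\psi)^\perp$. Thus $\tilde\alpha(X,Y)=\alpha^g(X,Y)-\<X,Y\>(\grad\psi)^\perp$, where the normal projection on the right is with respect to the Euclidean metric (conformal changes preserve orthogonality, so ``normal'' is unambiguous). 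Tracing over an orthonormal frame for the \emph{Euclidean} metric and dividing by $2$, and being careful that the trace and the mean curvature are themselves rescaled by the conformal factor, I would arrive at a relation of the form $\tilde{\h}_g=c\bigl(\h_g-\tfrac{1}{2}(\grad\psi)^\perp\bigr)$ for some positive conformal factor $c=\sigma^{-a}$. Substituting $\grad\psi=\tfrac{a}{2}\,e/\sigma$ and $\phi=\sigma\circ g$, the bracket becomes $\h_g-\tfrac{a}{4\phi}e^\perp$, so $\tilde{\h}_g=0$ is equivalent to $2\h_g=\tfrac{a}{2\phi}e^\perp$; I would double-check the numerical constant here against the definition, as the factors of $2$ between $\h_g$ and the trace of $\alpha^g$, and between $\psi$ and $a$, are exactly where an error would hide.

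The main obstacle, accordingly, is bookkeeping rather than conceptual: keeping straight which objects are rescaled by the conformal factor $\sigma^a$ when passing from the Euclidean trace to the $\tilde g$-trace (orthonormal frames for $\tilde g$ are $\sigma^{-a/2}$ times Euclidean ones, which squares to $\sigma^{-a}$ in the trace), and confirming that the constant in front of $e^\perp/\phi$ comes out to be precisely $a$ rather than $a/2$ or $2a$. Once the transformation law $\tilde\alpha(X,Y)=\alpha^g(X,Y)-\<X,Y\>(\grad\psi)^\perp$ is in hand, both directions of the equivalence follow at once: minimality in $(\R_+^{p+2},\<\cdot,\cdot\>_a)$ means $\tilde{\h}_g=0$, which by the computed identity holds if and only if $2\h_g=(a/\phi)e^\perp$, i.e. $g$ is $a$-singular minimal with density vector $e$. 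This simultaneously justifies the terminology ``singular minimal surface'' announced in the introduction.
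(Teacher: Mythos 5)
Your approach is exactly the paper's: it uses the same conformal transformation law for the second fundamental form, $\tilde\alpha(X,Y)=\alpha^g(X,Y)-\<X,Y\>(\grad\psi)^\perp$ with $\grad\psi=\frac{a}{2\phi}\,e$ along $g$, and concludes by tracing and comparing with the definition of an $a$-singular minimal surface. The only correction is the factor of $\tfrac12$ you flagged: tracing $-\<X,Y\>(\grad\psi)^\perp$ over a Euclidean orthonormal frame of the two-dimensional $L^2$ contributes $-2(\grad\psi)^\perp$, so with the convention $2\h_g=\operatorname{tr}\alpha^g$ the bracket is $\h_g-(\grad\psi)^\perp=\h_g-\frac{a}{2\phi}e^\perp$ rather than $\h_g-\frac{1}{2}(\grad\psi)^\perp$, and $\tilde{\h}_g=0$ is then equivalent to $2\h_g=\frac{a}{\phi}e^\perp$ as required; this matches the relation $2\h_2=\phi^{-a}\bigl(2\h_1-\frac{a}{\phi}e^\perp\bigr)$ recorded in the paper.
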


\begin{proof}
Since the metric $\<\cdot,\cdot\>_{a}$ is conformal to the standard metric of $\mathbb{R}^{p+2}_{+}$, it is well known that (see for instance \cite[p. 304]{da}) the second fundamental forms $\alpha_1$ and $\alpha_2$ of the isometric immersions $g\colon (L^2,g^*(\<\cdot,\cdot\>))\to(\mathbb{R}_+^{p+2},\<\cdot,\cdot\>)$ and $g\colon (L^2,g^*(\<\cdot,\cdot\>_{a}))\to(\mathbb{R}^{p+2}_{+},\<\cdot,\cdot\>_{a})$, respectively, are related by
$$
\alpha_2(X,Y)=\alpha_1(X,Y)-\frac{a}{2\phi}\<X,Y\>e^\perp, \, X,Y\in TL^2,  
$$
where, by abuse of notation, $\<\cdot,\cdot\>$ is the induced metric $g^*(\<\cdot,\cdot\>)$.
Hence, the corresponding mean curvature vector fields $\h_1$ and $\h_2$ of the isometric immersions satisfy
$$
2\h_2=\frac{1}{\phi^a}\left(2\h_1-\frac{a}{\phi}e^\perp\right).
$$
This completes the proof.\qed

\end{proof}

\section{Minimal $(n-2)$-umbilic submanifolds}

We provide now the proof of one of the basic theorems of the paper which shows that the minimal, generic ($n-2$)-umbilic submanifolds are actually ($n-2$)-rotational over ($n-2$)-singular minimal surfaces.

\vspace{2ex}

\begin{theorem}\label{sms}
The following assertions hold:
\begin{enumerate} 
\item[(i)] Any minimal generic $(n-2)$-umbilic, isometric immersion $f\colon M^n\to \mathbb{R}^{n+p},\, n\geq 4$, is an $(n-2)$-rotational submanifold.
\item[(ii)] An $(n-2)$-rotational submanifold $f\colon M^n\to \mathbb{R}^{n+p}$, $n\geq3$, is minimal if and only if its profile $g\colon L^2\to\mathbb{R}^{p+2}$ is an $a$-singular minimal surface, with $a=n-2$. 
\end{enumerate}
\end{theorem}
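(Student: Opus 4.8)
The plan is to treat the two assertions separately, dispatching the computational equivalence (ii) first and then the structural statement (i).

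For (ii) I would compute the second fundamental form of a rotational submanifold $f(x,y)=(h(x),\phi(x)j(y))$ directly from its parametrization and read off the trace. Fix a local orthonormal frame $e_1,e_2$ of $L^2$ and an orthonormal frame $\hat V_1,\dots,\hat V_{n-2}$ of $\mathbb{S}^{n-2}(1)$, so that $V_k=\hat V_k/\phi$ are orthonormal for the warped metric. The key tool is the linear lift $\ell\colon\mathbb{R}^{p+2}=\mathbb{R}^{p+1}\oplus\spa\{e\}\to\mathbb{R}^{n+p}$ sending $(\xi^{(1)},\xi_e e)\mapsto(\xi^{(1)},\xi_e j(y))$, which carries vectors tangent (resp.\ normal) to $g=(h,\phi e)$ to vectors tangent (resp.\ normal) to $f$, and in particular $\ell(e)=(0,j(y))$. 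Differentiating $f$ in the two base directions and comparing with the corresponding derivatives of $g$ shows $\alpha^f(e_i,e_j)=\ell(\alpha^g(e_i,e_j))$, so the base directions contribute $\ell(2\h_g)$ to $\operatorname{tr}\alpha^f$. Differentiating along a fiber and using $D_{\hat V_k}\hat V_k=\nabla^{\mathbb{S}}_{\hat V_k}\hat V_k-j$ shows each fiber direction is umbilical with normal curvature $-\tfrac1\phi\,\ell(e^\perp)$, since the radial field $(0,j)=\ell(e)$ has normal part $\ell(e^\perp)$. Summing the two contributions gives
\[
\operatorname{tr}\alpha^f=\ell\!\Big(2\h_g-\tfrac{n-2}{\phi}e^\perp\Big),
\]
and since $\ell$ is injective, $f$ is minimal if and only if $2\h_g=\tfrac{n-2}{\phi}e^\perp$, i.e.\ $g$ is $(n-2)$-singular minimal. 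This settles (ii).

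For (i) I would exploit the umbilical distribution. Writing $\mathcal U=\ker(A_\eta-\mu I)$, genericity forces $\operatorname{rank}\mathcal U=n-2$ everywhere, so $TM=\mathcal U\oplus\mathcal V$ splits smoothly with $\operatorname{rank}\mathcal V=2$. The defining property of $\mathcal U$ gives $\alpha^f(U,T)=0$ for $U\in\mathcal U,\,T\in\mathcal V$ and $\alpha^f(U,U')=\mu\<U,U'\>\eta$, so the only second fundamental form touching $\mathcal U$ points in the direction $\eta$. Feeding this into the Codazzi equation of $f$ in $\mathbb{R}^{n+p}$ (whose normal curvature tensor vanishes), with arguments taken successively in $\mathcal U,\mathcal U,\mathcal V$ and in $\mathcal U,\mathcal V,\mathcal V$, I expect to extract that: the leaves of $\mathcal U$ are umbilical in $M$ with a mean curvature normal $\delta\in\Gamma(\mathcal V)$; that $\eta$ is parallel along $\mathcal U$ in the normal connection of $f$; that $\mathcal V$ is autoparallel (hence integrable, with $L^2$ as leaf); and, crucially, that $\delta$ is parallel along $\mathcal U$, so that $\mathcal U$ is a \emph{spherical} distribution and its leaves are extrinsic spheres, thus round $(n-2)$-spheres in affine $(n-1)$-planes. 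Minimality enters here through the relation $(n-2)\mu\eta+\operatorname{tr}(\alpha^f|_{\mathcal V})=0$, which binds the $\mathcal V$-geometry to $\mu$ and $\eta$ and is what keeps the first normal bundle small enough for the reduction.

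Once $\mathcal U$ is spherical and $\mathcal V$ autoparallel, I would invoke the warped-product decomposition theory (Hiepko's theorem, Nölker's theorem, and the characterization of rotational submanifolds in \cite{da}): $M^n$ is realized locally as a warped product $L^2\times_\phi\mathbb{S}^{n-2}$ and $f$ as a warped product of immersions with round spherical fibers. Identifying the common center locus of these spheres as a fixed axis $\mathbb{R}^{p+1}$ and the orthogonal trajectory as the profile $g\colon L^2\to\mathbb{R}^{p+2}$ then exhibits $f$ as an $(n-2)$-rotational submanifold. The hard part will be the middle step of (i): proving that $\delta$ is parallel along $\mathcal U$ (sphericity) and that the affine hulls of the leaves share a common axis, equivalently that the relevant normal subbundle is parallel in $\mathbb{R}^{n+p}$. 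This is precisely where the joint force of minimality and genericity must be used, and verifying the hypotheses of the decomposition theorem — rather than the final identification — is the delicate obstacle.
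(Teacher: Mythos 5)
Your part (ii) is correct and is essentially the paper's own argument: your lift $\ell$, restricted to the normal bundle of $g$, is exactly the vector bundle isometry $\P_{(x,y)}(\xi)=(\xi-\<\xi,e\>e)+\<\xi,e\>y$ used in the paper, and the computation of the trace of the second fundamental form (base directions contributing $\P(2\h_g)$, each unit fiber direction contributing $-\tfrac{1}{\phi}\P(e^\perp)$) is the same, leading to $n\h_f=\P\big(2\h_g-\tfrac{n-2}{\phi}e^\perp\big)$ and the desired equivalence. Nothing to object to there.

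Part (i) is where the gap lies. You correctly identify the standard route — Codazzi equations forcing the umbilic distribution $\mathcal U=\ker(A_\eta-\mu I)$ to be spherical and its orthogonal complement integrable, followed by a warped-product/rotational reduction \`a la N\"olker — but you do not actually carry out the decisive steps: you write that you ``expect to extract'' sphericity of $\mathcal U$ and that proving $\delta$ is parallel along $\mathcal U$ together with the existence of a common axis ``is the delicate obstacle.'' That obstacle is the entire content of assertion (i); without it the claim is unproven. The paper itself does not reprove this either: it disposes of (i) in one line by invoking Theorem 2 of Dajczer--Florit, \emph{On Chen's basic equality} (Illinois J.\ Math.\ 42 (1998)), noting that minimality supplies the hypothesis needed there. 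So your outline is pointed in the right direction and very likely mirrors the proof of the cited theorem, but as written it is a plan rather than a proof; to close it you must either execute the Codazzi analysis (in particular showing how minimality and genericity combine to give sphericity and the parallel normal subbundle that fixes the axis $\mathbb{R}^{p+1}$) or, as the paper does, quote the Dajczer--Florit result directly.
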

\begin{proof}
$(i)$ The first assertion is an immediate consequence of Theorem 2 in \cite{DF1}, due to the minimality of $f$.

$(ii)$ Let $f\colon M^n= L^2\times_{\phi}\mathbb{S}^{n-2}\to\mathbb{R}^{n+p}, n\geq 3,$ be an  $(n-2)$-rotational submanifold with profile $g\colon L^2\to \mathbb{R}^{p+2}$. Observe that if $\xi\in N_gL(x)$, then $\xi-\<\xi,e\>e +\<\xi,e\>y \in N_fM(x,y)$ for any $(x, y) \in L^2\times\mathbb{S}^{n-2}$. Then, it is easy to see that the map $\P\colon N_gL\to N_fM$ given by
$$
\P_{(x,y)}(\xi)=(\xi-\<\xi,e\>e) +\<\xi,e\>y
$$ 
is a vector bundle isometry. Using the fact that the induced metric on $M^n$ is the warped product metric $\<\cdot,\cdot\>_{M^n}=\<\cdot,\cdot\>_{L^2}+\phi^2\<\cdot,\cdot\>_{\mathbb{S}^{n-2}}$ and the Weingarten formulas, we find that the shape operators $A_{\P\xi}^f$ and $A^g_\xi$ of $f$ and $g$ associated to $\P(\xi)$ and $\xi$, respectively, satisfy 
\begin{align*}
&\<A_{\P\xi}^fX_1,X_2\>_{M^n}=\<A^g_\xi X_1,X_2\>_{L^2},\quad X_1,X_2\in TL,\\
&\<A_{\P\xi}^fX,Y\>_{M^n}=0,\quad X\in TL,\, Y\in T\mathbb{S}^{n-1},\\
&\<A_{\P\xi}^fY_1,Y_2\>_{M^n}=-\frac{\<\xi,e\>}{\phi}\<Y_1,Y_2\>_{M^n},\quad Y_1,Y_2\in T\mathbb{S}^{n-1}.
\end{align*}
Let $\{\xi_1,\dots,\xi_p\}$ be a local orthonormal frame of  $N_gL$. Then $\{\P(\xi_1),\dots,\P(\xi_p)\}$ is an orthonormal frame of $N_fM$. Thus, the mean curvature of $f$ is given by
\begin{align*}
n\h_f&=\sum^p_{a=1}{\rm tr}A^f_{\P\xi_a}\P(\xi_a)=\P\left(\sum^p_{a=1}\left({\rm tr}A^g_{\xi_a}-\frac{(n-2)\<\xi,e\>}{\phi}\right)\xi_a\right)\\
&=\P \left(2\h_g-\frac{n-2}{\phi}e^\perp\right),
\end{align*}
where $\h_g$ is the mean curvature of $g$. This immediately implies that $f$ is minimal if and only if $g$ is an $a$-singular minimal surface with $a=n-2$.\qed
\end{proof}

\medskip

\vspace{2ex}
Now we focus on the class of ($n-2$)-singular minimal surfaces in order to provide some results of local and global character. 

\begin{theorem}\label{cylpar}
The only substantial $(n-2)$-singular minimal flat ruled surfaces $g\colon L^2\to \mathbb{R}^{p+2}$ are the cylinders over a plane curve $c$, which is not a circle, and its curvature $k$ satisfies the differential equation
\begin{equation}\label{pde}
\dot{k}^2 + \left(\frac{n-1}{n-2}\right)^2k^4-\left(\frac{n-1}{\lambda}\right)^2k^{2n/(n-1)}=0,
\end{equation}
where $\lambda$ is a nonzero constant. Conversely, the cylinder over a plane curve whose curvature is a nonconstant solution of the ODE (\ref{pde}) is an ($n-2$)-singular minimal surface.
 \end{theorem}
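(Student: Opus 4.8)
The plan is to put the surface into the standard local form of a flat (developable) ruled surface, substitute it into the singular minimal equation $2\mathcal{H}_g=\frac{n-2}{\phi}e^{\perp}$, and use the behaviour along a fixed ruling as the main organizing device.

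First I would write $g(s,t)=\gamma(s)+t\delta(s)$ with $\|\delta\|\equiv 1$, so the rulings are the $t$-lines. Since $g_{tt}=0$ one has $\alpha(\partial_t,\partial_t)=0$, so the Gauss equation gives $K=-\|\alpha(\partial_s,\partial_t)\|^2/(EG-F^2)$; hence flatness is equivalent to $\alpha(\partial_s,\partial_t)=0$, i.e. to $\dot\delta$ being tangent to $g$. With $G=\|\delta\|^2=1$, $F=\langle g_s,\delta\rangle$ and $E=\|g_s\|^2$, the mean curvature vector collapses to $2\mathcal{H}_g=(g_{ss})^{\perp}/(E-F^2)$. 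The classical trichotomy for developables then leaves only three possibilities: $\delta$ constant (cylinder), the rulings concurrent (cone), or $\delta$ parallel to $\dot\gamma$ (tangent developable). I also record that $\phi=\langle\gamma,e\rangle+t\langle\delta,e\rangle$ is affine in $t$ along each ruling.

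Next I would impose the singular minimal equation and compare the two sides as functions of $t$. For a cylinder $\delta\equiv v$ I may take the directrix $\gamma\subset v^{\perp}$ arc-length, so $E=G=1$, $F=0$ and $2\mathcal{H}_g=\ddot\gamma$; matching $\frac{n-2}{\phi}e^{\perp}$ then forces $\langle v,e\rangle=0$ and $\ddot\gamma=\frac{n-2}{\phi}e^{\perp}\in\mathrm{span}\{\dot\gamma,e\}$, so that $\gamma$ is a plane curve whose plane contains $e$, and substantiality gives $p=1$. For a tangent developable one finds $2\mathcal{H}_g=\frac{\tau}{tk}B$, whose $1/t$ blow-up as $t\to0$ is incompatible with $\frac{n-2}{\phi}e^{\perp}$ unless $\langle\gamma,e\rangle\equiv 0$; differentiating this identity repeatedly along the Frenet frame then forces $e$ orthogonal to every Frenet vector, contradicting $e\neq0$. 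I expect this step to be the real obstacle: the cone case, for which $2\mathcal{H}_g=(\ddot\delta+\delta)/t$, forces $(\ddot\delta+\delta)\parallel e^{\perp}$ and pins the apex on $\{\sigma=0\}$; one must use this parallelism to confine $\delta$ to a three-dimensional subspace and then invoke substantiality to discard it, and making this exclusion airtight in every codimension is the delicate point of the whole argument.

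Finally, on the surviving cylinder I would run the one-dimensional reduction. Parametrizing $\gamma$ by arc length with Frenet frame $T,N$ and writing $e=a_1T+a_2N$, $a_1^2+a_2^2=1$, the equation becomes $k\phi=(n-2)a_2$ with $\phi=\langle\gamma,e\rangle$. Using $\dot a_1=ka_2$, $\dot a_2=-ka_1$ and $\dot\phi=a_1$, differentiation yields $\dot{k}\,\phi=-(n-1)k\,a_1$, hence $\dot\phi/\phi=-\dot{k}/((n-1)k)$ and the first integral $\phi=\lambda k^{-1/(n-1)}$; substituting this together with $a_1=-\dot{k}\phi/((n-1)k)$ and $a_2=k\phi/(n-2)$ into $a_1^2+a_2^2=1$ eliminates $\phi$ and gives exactly the stated ODE, with $\lambda$ the integration constant. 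A constant $k$ is excluded because it would force $a_1\equiv 0$, i.e. $e$ normal to $\gamma$ everywhere, which is impossible for a fixed $e$; this is the ``not a circle'' clause. For the converse, given a nonconstant solution $k$ I would define $\phi=\lambda k^{-1/(n-1)}$ and $a_1,a_2$ by the formulas above, check $a_1^2+a_2^2=1$ directly from the ODE, set $e=a_1T+a_2N$ and verify $\dot e=0$ via the Frenet relations, translate $\gamma$ so that $\langle\gamma,e\rangle=\phi$, and conclude that the cylinder over $\gamma$ with ruling orthogonal to its plane satisfies $2\mathcal{H}_g=\frac{n-2}{\phi}e^{\perp}$.
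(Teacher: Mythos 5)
Your reduction to the developable trichotomy and your analysis of the cylinder are correct and coincide with the paper's Case I: the relations $k\phi=(n-2)a_2$ and $\dot k\,\phi=-(n-1)ka_1$, the first integral $\phi=\lambda k^{-1/(n-1)}$, and the substitution into $a_1^2+a_2^2=1$ are exactly how \eqref{pde} is derived there, and your exclusion of the circle and your converse also match the paper's. Your disposal of the tangent developable is fine as well, and slightly cleaner than the paper's: $\<\gamma,e\>\equiv0$ gives $\<\dot\gamma,e\>\equiv0$ and hence $\phi\equiv0$ after a single differentiation.

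The genuine gap is the cone, precisely where you flag it. The route you sketch --- confine $\delta$ to a three-dimensional subspace and discard the cone by substantiality --- cannot close it, since for $p=1$ a cone over a spherical curve that is not a great circle is already substantial in $\mathbb{R}^3$. Worse, your own (correct) formula $2\h_g=(\ddot\delta+\delta)/t$ shows that no $t$-dependence argument is available either: once the apex is forced onto $\{\sigma=0\}$, the factor $1/t$ cancels against $1/\phi=1/(t\<\delta,e\>)$, and the singular minimal equation collapses to the $t$-independent condition $k_g\<\delta,e\>=(n-2)\<e,N\>$ on the spherical directrix, $k_g$ being its geodesic curvature in $\mathbb{S}^{p+1}$. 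That is exactly the geodesic equation of the conformal metric $\sigma^{2(n-2)}\<\cdot,\cdot\>$ on the open hemisphere $\{\sigma>0\}$, and such geodesics exist through every point in every direction, generically with $k_g\neq0$. Be aware that the paper's Case II eliminates the cone by asserting $2\h_g=k_1N_1$ with no factor $1/t$ and reading a contradiction off the resulting $t$-dependence; this conflicts with your formula, and yours is the right one (for the circular cone at latitude $\alpha$ one has $2H=\tan\alpha/t=k_g/t$). So the cone case is not merely delicate: it either needs an argument of a completely different nature or it points to a real problem with the statement itself, and your proposal as it stands does not resolve it.
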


\begin{proof}
Let  $g\colon L^2\to\mathbb{R}^{p+2}$ be a flat ruled  $(n-2)$-singular minimal surface with density vector $e$. Suppose that $g$ is parametrized by 
$$
g(s,t)=c(s)+tw(s), \quad (s,t)\in L=I\times \mathbb{R},
$$
where $c\colon I\to\mathbb{R}^{p+1}$ is a unit speed curve with Frenet frame  $\{\dot{c}, N_1,\dots,N_p\}$ and $w$ is a unit vector field along $c$ perpendicular to $\dot{c}$.
Since $g$ is flat we know that $\dot{w}$ is parallel to $\dot{c}$, namely $\dot{w}(s)=a(s) \dot{c}(s)$, for a smooth function $a(s), s\in I$. We distinguish the following cases:
\medskip

\noindent\emph{Case I.} Suppose that $a=0$. Hence, $w$ is constant and $g$ is a cylinder. The normal bundle of $g$ is given by $N_gL={\rm span}\{N_1,\dots,N_p\}$.  Then we find that the shape operators $A_{N_i}$ of $g$ satisfy
$$
A_{N_1}\frac{\partial}{\partial s}=k_1\frac{\partial}{\partial s}, \quad A_{N_1}\frac{\partial}{\partial t}=0 \quad \text{and} \quad  A_{N_i}=0, i=2,\dots,p,
$$
where $k_1>0$ is the first Frenet curvature of $c$. Thus, $2\h_g=k_1N_1$. Then, the condition of being $(n-2)$-singular minimal is equivalent to the following equations
$$
\<e,N_i\>=0, i=2,\dots,p\ \, \ \text{and} \ \, \ k_1\left(\<c,e\>+t\<w,e\>\right)=(n-2)\<e,N_1\>.
$$
Hence, $\<w,e\>=0$ and thus, the functions  $u=\<c,e\>$ and $v=\<e,N_1\>$ satisfy
\begin{align}\label{uv}
e=\dot{u}\dot{c}+vN_1\\
k_1u=(n-2)v.\label{uvvv}
\end{align}
By differentiating equation \eqref{uv} and using the Frenet formulas, we obtain
\begin{equation}\label{uvk}
\ddot{u}=vk_1
\end{equation}
\begin{equation}\label{k2}
k_1\dot{u}=-\dot{v}, \  vk_2=0.
\end{equation}
Using equations \eqref{uv}, \eqref{uvvv} and the last equation of \eqref{k2}, we conclude that $k_2=0$. Thus $c$ is a plane curve and since $g$ is substantial we have that $p=1$. 
Differentiating equation \eqref{uvvv} and using the first equation of \eqref{k2}, we find
\begin{equation}\label{ert}
(n-1)k_1\dot{u}+\dot{k_1}u=0.
\end{equation}
Then, $(k_1u^{n-1})\dot{}=0$ and thus, 
\begin{equation}\label{k1}
u=\lambda k_1^{-\frac{1}{n-1}},
\end{equation}
where $\lambda\neq0$ is a constant. It follows from equation \eqref{uvvv} that
\begin{equation}\label{ert2}
 v=\frac{\lambda}{n-2}k_1^\frac{n-2}{n-1}.
\end{equation}
We claim that $c$ is not a portion of a circle. If otherwise, then the above two equations imply that both $u$ and $v$ are constant functions, which contradicts equation \eqref{uvk}. 
Equation \eqref{uv} implies that 
\begin{equation*}
\dot{u}^2+v^2=1.
\end{equation*}
Using equations \eqref{k1} and \eqref{ert2}, then it follows from the above that $k_1$ satisfies equation \eqref{pde}. 

Conversely, let $k_1$ be the curvature of a plane curve $c$ that is a non-constant solution of \eqref{pde}. We consider the functions $u$ and $v$ given by \eqref{k1} and \eqref{ert2}, respectively.
Using that $k_1$ solves the differential equation \eqref{pde}, we find that $\dot{u}\dot{c}+vN_1$ is a constant unit vector $e$, where $N_1$ is the unit normal vector of $c$. Moreover, we have that $k_1u=(n-2)v$. We now consider the cylinder in $\mathbb{R}^3$ over the plane curve $c$.  Then, it is easy to see that the cylinder is an $(n-2)$-singular minimal surface.

\medskip

\noindent\emph{Case II.} Suppose that $a$ is a nonzero constant. Then, the surface is a cone, and without loss of generality, we may suppose that it is parametrized by $g(s,t)=tc(s)$, where $c$ is a unit speed curve in $\mathbb{S}^{p+1}$ with Frenet frame $\{\dot{c}, N_1,\dots,N_p\}$ and Frenet curvatures $\{k_1,\dots,k_{p}\}$. It is easy to see that the induced metric of $g$ is the warped product $\<\cdot,\cdot\>_g=t^2ds^2+dt^2$. As in Case I, we find that $2\h_g=k_1N_1.$
Since $g$ is an $(n-2)$-singular minimal surface with density vector $e$ then
$$
k_1=\frac{n-2}{t\<c,e\>}\<e,N_1\>,
$$
which is obviously a contradiction.

\medskip

\noindent\emph{Case III.} Suppose that $a\dot{a}\neq0$. Then, we may reparametrize the surface $g$ such that $g(s,t)=c(s)+t\dot{c}(s)$, where $c$ is a unit speed curve with Frenet frame  $\{\dot{c}, N_1,\dots,N_p\}$ and Frenet curvatures $\{k_1,\dots, k_{p+1}\}$. Then the induced metric of $g$ is given by 
$$
\<\cdot,\cdot\>_g=(1+t^2k_1^2)ds^2+2dsdt+dt^2.
$$
Using that $\Delta g=2\h_g$,
where $\Delta g$ is the Laplacian of $g$, we find that

$$
2\h_g=\frac{1}{tk_1}\left((t^2+1) \dot{c} + \left(\frac{t\dot{k}_1+tk^3_1-t^2k_1-k_1}{tk_1}\right)N_1+k_2N_2\right).
$$
Since $g$ is an $(n-2)$-singular minimal surface with density vector $e$, we have $2\h_g=\frac{n-2}{\phi}e^\perp$. Taking the $\dot{c}$ component, we clearly reach a contradiction and this completes the proof.\qed
\end{proof}

\begin{remark}{\em The existence of non-flat ruled surfaces that are $(n-2)$-singular minimal is reduced to a rather involved system of ordinary differential equations.}
\end{remark}

\vspace{2ex}
\begin{proposition}\label{global}
There are no complete $(n-2)$-singular minimal surfaces with Gaussian curvature bounded from below, that are contained between two hyperplanes perpendicular to the density vector $e$.
\end{proposition}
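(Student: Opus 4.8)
The plan is to reduce the statement to the Omori--Yau maximum principle applied to the height function $\phi=\langle g,e\rangle$. First I would observe that ``contained between two hyperplanes perpendicular to $e$'' means exactly that $\phi$ is bounded, $0<\phi\le\Lambda$ for some constant $\Lambda$, so in particular $0<\sup_L\phi<\infty$. The key computation is the Laplacian of $\phi$ with respect to the induced metric. Using Beltrami's formula $\Delta g=2\h_g$ (exactly as in the proof of Theorem~\ref{cylpar}) together with the defining equation $2\h_g=\frac{n-2}{\phi}e^{\perp}$ of an $(n-2)$-singular minimal surface, I obtain
\[
\Delta\phi=\langle\Delta g,e\rangle=\langle 2\h_g,e\rangle=\frac{n-2}{\phi}\langle e^{\perp},e\rangle .
\]
Decomposing $e=e^{\top}+e^{\perp}$ with $e^{\top}=dg(\grad\phi)$ and using $\|e\|^2=1$, one has $\langle e^{\perp},e\rangle=\|e^{\perp}\|^2=1-\|\grad\phi\|^2$, whence
\[
\Delta\phi=\frac{n-2}{\phi}\bigl(1-\|\grad\phi\|^2\bigr).
\]
Since $\|\grad\phi\|^2=\|e^{\top}\|^2\le\|e\|^2=1$, this already shows that $\phi$ is subharmonic, which is what identifies the supremum as the right place to look.

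Next I would invoke the Omori--Yau maximum principle. As $L^2$ is complete and its Gaussian curvature is bounded from below, its Ricci curvature (which in dimension two is $K\langle\cdot,\cdot\rangle$) is bounded from below, so the principle applies to the bounded-above function $\phi$. It produces a sequence $\{x_k\}\subset L$ with
\[
\phi(x_k)\to\sup_L\phi,\qquad \|\grad\phi(x_k)\|\to 0,\qquad \limsup_{k\to\infty}\Delta\phi(x_k)\le 0 .
\]
Substituting into the formula for $\Delta\phi$ and passing to the limit gives
\[
\limsup_{k\to\infty}\Delta\phi(x_k)=\frac{n-2}{\sup_L\phi}\,(1-0)=\frac{n-2}{\sup_L\phi}>0,
\]
because $n-2>0$ and $0<\sup_L\phi<\infty$. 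This contradicts $\limsup_k\Delta\phi(x_k)\le 0$, and the nonexistence follows.

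The computation itself is elementary, so the substantive content is bookkeeping rather than depth. I would note that the identity $\|e^{\perp}\|^2=1-\|\grad\phi\|^2$ makes the Laplacian strictly positive wherever $e$ is not tangent, so the constant case is excluded as well; also, the analogous argument at the infimum yields $\liminf\Delta\phi(x_k)\ge 0$, which is consistent with the positive value and gives no contradiction, so only the upper hyperplane is genuinely needed. The one hypothesis requiring care is the curvature bound: it serves solely to legitimize Omori--Yau, and I would state explicitly that in dimension two a lower bound on the Gaussian curvature is equivalent to a lower Ricci bound. I expect the main obstacle to be purely technical, namely quoting the maximum principle in a form valid under a Ricci (equivalently Gaussian curvature) lower bound and matching the sign convention for $\Delta$ to the convention $\Delta g=2\h_g$ employed elsewhere in the paper.
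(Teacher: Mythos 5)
Your argument is correct and follows essentially the same route as the paper: decompose $e=dg(\mathrm{grad}\,\phi)+e^{\perp}$, compute $\Delta\phi=\frac{n-2}{\phi}\|e^{\perp}\|^2$, and contradict the Omori--Yau maximum principle using the boundedness of $\phi$. The only difference is that the paper applies the principle to the auxiliary function $\phi^2$, for which $\tfrac12\Delta\phi^2=\phi\Delta\phi+\|\nabla\phi\|^2\geq\|e^\perp\|^2+\|e^\top\|^2=1$ gives a uniform positive lower bound and hence only the Laplacian estimate of the principle is needed, whereas you apply the full principle (including the gradient-decay condition $\|\mathrm{grad}\,\phi(x_k)\|\to0$, which is what forces $\|e^{\perp}(x_k)\|^2\to1$) directly to $\phi$; both versions are legitimate under the hypothesis that the Gaussian curvature, equivalently the Ricci curvature in dimension two, is bounded from below.
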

\begin{proof}
Let $g\colon L^2\to \mathbb{R}^{p+2}$ be a complete $(n-2)$-singular minimal surface with density vector $e$ and Gaussian curvature bounded from below. At each point of $L^2$, we decompose the density vector $e$ into its tangent and normal components, namely $e=dg(e^{\top})+e^\perp$.  
Observe that the gradient of $\phi$ is given by $\grad\phi=e^\top$. Moreover, we have 
$$
\nabla_Xe^\top=A_{e^\perp}X,\  X\in TL,
$$
where $\nabla$ is the Levi-Civita connection of $L^2$ and $A_{e^\perp}$ is the shape operator of $g$ associated to the direction of $e^\perp$.
 Then, a direct computation shows that the Laplacian of the function $\phi=\<g,e\>$ is given by
\begin{equation*}
 \Delta\phi={\rm trace}(A_{e^\perp})=\<2\h_g,e^\perp\>=\frac{n-2}{\phi}\|e^\perp\|^2.
\end{equation*}
Using the above equation, we find
\begin{equation}\label{ineq}
\frac{1}{2}\Delta\phi^2=\phi\Delta\phi+\|\nabla\phi\|^2\geq1.
\end{equation}
Assume now, that $g$ is contained between two hyperplanes perpendicular to the density vector $e$. Then, $\phi$ is bounded and by the Omori-Yau maximum principle (see \cite{omori}) there exists a sequence $\{x_m\}$ of points in $L^2$ such that 
$$
{\rm lim}_{m\to\infty}\phi(x_m)={\rm sup}\, \phi \quad \text{and} \quad \Delta\phi(x_m)\leq1/m
$$
which clearly contradicts inequality \eqref{ineq} for $m$ large enough.\qed
\end{proof}

\begin{remark}{\em   It is worth pointing out the similarity between the condition of being $(n-2)$-singular minimal surface and the one being a translating soliton. However, $(n-2)$-singular minimal surfaces cannot be translating solitons. In fact,  it follows from the inequality \eqref{ineq} in the proof of Proposition \ref{global} that no open subset of an $(n-2)$-singular minimal surface is contained between two hyperplanes perpendicular to the density vector $e$. }
\end{remark}

\section{A representation theorem for $(n-2)$-singular minimal surfaces}
%\vspace{1,5ex}
It turns out from Theorem \ref{sms} that a local classification of minimal $(n-2)$-umbilic submanifolds is reduced to the local description of all $(n-2)$-singular minimal surfaces.

 Our goal in this section is to describe, locally, all $(n-2)$-singular minimal surfaces in $\mathbb{R}^{p+2}$. Let $e\in\mathbb{R}^{p+2}$ be a unit  vector and $\mathbb{R}^{p+2}_+=\{x\in\mathbb{R}^{p+2}\colon \sigma(x)=\<x,e\>>0\}$. Let $(x_1, \dots, x_{p+2})\in\mathbb{R}^{p+2}$ be the coordinates with respect to  an orthonormal basis $\{e_1, \dots, e_{p+1}, e_{p+2}=e\}$. We now consider the Riemannian manifold $(\mathbb{R}^{p+1}\times\mathbb{R}_+,\widetilde{\<\cdot,\cdot\>})$ with coordinates $(x_1, \dots, x_{p+1}, \omega)\in\mathbb{R}^{p+1}\times\mathbb{R}_+$ equipped with the warped product metric given by 
$$
\widetilde{\<\cdot,\cdot\>}=e^{\eta} \sum_{i=1}^{p+1} d{x_i}^2+d\omega^2,
$$
where   
\begin{equation}\label{h}
\eta=\eta(\omega)=\frac{2(n-2)}{n}\log\left(\frac{n}{2}\omega\right).
\end{equation}
The map $\tau\colon\left(\mathbb{R}_+^{p+2}, \<\cdot,\cdot\>_{n-2}\right)\to\left(\mathbb{R}^{p+1}\times\mathbb{R}_+,\widetilde{\<\cdot,\cdot\>}\right)$ given by
$$
\tau(x_1, \dots, x_{p+2})=(x_1, \dots, x_{p+1}, \frac{2}{n}x_{p+2}^\frac{n}{2})
$$
is an isometry between the Riemannian manifold $(\mathbb{R}_+^{p+2}, \<\cdot,\cdot\>_{n-2})$ and the manifold $(\mathbb{R}^{p+1}\times\mathbb{R}_+,\widetilde{\<\cdot,\cdot\>})$.

Let $g\colon L^2\to(\mathbb{R}_+^{p+2}, \<\cdot,\cdot\>)$ be an isometric immersion of a two dimensional Riemannian manifold $L^2$. It turns out from Theorem \ref{minimal} and the above discussion that $g$ is an $(n-2)$-singular minimal surface with density vector $e$ if and only if the immersion $Y\colon L^2\to(\mathbb{R}^{p+1}\times\mathbb{R}_+,\widetilde{\<\cdot,\cdot\>})$ given by $Y=\tau\circ g$ is minimal. Here $g$ is seen as an immersion into the Riemannian manifold $(\mathbb{R}_+^{p+2}, \<\cdot,\cdot\>_{n-2})$.

\bigskip

\bigskip

\bigskip

\begin{picture}(150,84)\hspace{-15ex}
\put(140,30){$L^2$}
\put(120,55){$g$}\put(155,32){\vector(1,0){145}}
\put(305,30){$\left(\mathbb{R}^{p+1}\times\mathbb{R}_+,\widetilde{\<\cdot,\cdot\>}\right)$}
\put(205,45){$\circlearrowright$}
\put(180,20){$Y=\tau\circ g$}
\put(142,42){\vector(0,1){30}} %\put(149,42){\vector(0,1){30}}
\put(98,80){$\left(\mathbb{R}_+^{p+2}, \<\cdot,\cdot\>_{n-2}\right)$}
\put(180,80){\vector(3,-1){120}} \put(235,65){$\tau$}
\end{picture}

Let $Y\colon L^2\to(\mathbb{R}^{p+1}\times\mathbb{R}_+,\widetilde{\<\cdot,\cdot\>})$ be an isometric immersion.
For any simply connected chart $U$ of $L^2$ with complex coordinates $z=x+\mathsf{i}y$, such that the induced metric of the immersion $Y$ is given by $\<\cdot,\cdot\>_Y=\lambda^2(dx^2+dy^2)$, we consider the associated Wirtinger operators as follows:
$$
\partial=\frac{\partial}{\partial z}=\frac{1}{2}(\partial _{x}-\mathsf{i}\partial_y), \,\  \ \
\bar{\partial}=\frac{\partial}{\partial {\bar{z}}}=\frac{1}{2}(\partial_x+\mathsf{i}\partial_y).
$$
Now we define the complex functions $\varphi_i\colon U\subset L^2\to\mathbb{C}, 1\leq i\leq p+2$, by
\begin{equation*}
\varphi_i=e^\frac{\eta}{2}y_{i,z}, \   1\leq i\leq p+1 \quad
\text{and} \quad
\varphi_{p+2}=y_{p+2,z},
\end{equation*}
where $y_{i,z}=\frac{\partial y_i}{\partial z}$, for $1\leq p+2$, and $(y_1,\dots, y_{p+2})$ are the coordinate functions of $Y$. Clearly, we have
\begin{equation}\label{iii}
\sum_{i=1}^{p+2}\varphi_i^2=0,  \quad
\sum_{i=1}^{p+2}|\varphi_i|^2=\lambda^2/2.
\end{equation}

The functions $\varphi_i, 1\leq i\leq p+2$, determine locally the Gauss map of $g$ as a surface in the Euclidean space.

\begin{lemma}\label{fi}
An isometric immersion $Y\colon L^2\to(\mathbb{R}^{p+1}\times\mathbb{R}_+,\widetilde{\<\cdot,\cdot\>})$ is minimal if and only if for any complex chart $(U, z)$ of $L^2$ the associated complex functions  $\varphi_i, 1\leq i\leq p+2, $ satisfy the following conditions 

\begin{align}
& 2\varphi_{i,\bar{z}}=-\eta'  \bar{\varphi}_i\varphi_{p+2}, \,\, 1\leq i\leq p+1, \label{i}\\ 
&2\varphi_{p+2,\bar{z}}=\eta'\sum_{i=1}^{p+1}|\varphi_i|^2 \label{ii}. 
\end{align}
where $\eta'=\frac{d\eta}{d\omega}$.
\iffalse{
Conversely, consider complex functions $\varphi_i\colon U\to\mathbb{C}, 1\leq i\leq p+2$, on a simply connected domain of $\mathbb{C}$ that satisfy conditions $(i)-(iv)$ as above. Then, $Y\colon U\to\left(\mathbb{R}^{p+1}\times\mathbb{R}_+,\widetilde{\<\cdot,\cdot\>}\right)$ defined by
$$
Y=\left( {\rm Re}\int e^{-\frac{\eta}{2}}\varphi_1 dz,\dots, {\rm Re}\int e^{-\frac{\eta}{2}}\varphi_{p+1} dz, {\rm Re}\int\varphi_{p+2} dz  \right),
$$
is a minimal immersion.\fi
\end{lemma}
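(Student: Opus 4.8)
The plan is to reduce the minimality of $Y$ to a single complex second-order equation, and then to unravel that equation componentwise using the Christoffel symbols of the warped-product metric. Throughout I write $Y_z=dY(\p_z)$, let $E_1,\dots,E_{p+1},E_{p+2}$ denote the coordinate frame $\p/\p x_1,\dots,\p/\p x_{p+1},\p/\p\omega$ of the target, and $\nab$ the ambient Levi-Civita connection along $Y$.

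\textbf{Step 1 (reduction to $\nab_{\p_{\bar z}}Y_z=0$).} Since $(U,z)$ is isothermal for $Y$, the conformality relations \eqref{iii} say precisely that $\widetilde{\<Y_z,Y_z\>}=0$ and $\widetilde{\<Y_z,Y_{\bar z}\>}=\lambda^2/2$. Differentiating $\widetilde{\<Y_z,Y_z\>}=0$ in $\bar z$ gives $\widetilde{\<\nab_{\p_{\bar z}}Y_z,Y_z\>}=0$, and differentiating $\widetilde{\<Y_{\bar z},Y_{\bar z}\>}=0$ in $z$, together with the torsion-free identity $\nab_{\p_z}Y_{\bar z}=\nab_{\p_{\bar z}}Y_z$ (valid because $[\p_z,\p_{\bar z}]=0$), gives $\widetilde{\<\nab_{\p_{\bar z}}Y_z,Y_{\bar z}\>}=0$. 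Hence $\nab_{\p_{\bar z}}Y_z$ is orthogonal to the span of $Y_z,Y_{\bar z}$, i.e. purely normal. Since $\nab_{\p_{\bar z}}Y_z=\tfrac14(\nab_{\p_x}Y_x+\nab_{\p_y}Y_y)$, its normal part is $\tfrac{\lambda^2}{4}\,(2\h)$, so $Y$ is minimal if and only if $\nab_{\p_{\bar z}}Y_z=0$.

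\textbf{Step 2 (Christoffel symbols and expansion).} As $\eta=\eta(\omega)$, the only nonconstant metric coefficients are $\widetilde g_{ii}=e^\eta$ for $1\le i\le p+1$, and a direct computation shows the only nonzero Christoffel symbols are
$$
\Gamma^{p+2}_{ij}=-\tfrac12\eta'e^\eta\delta_{ij},\qquad \Gamma^k_{i\,p+2}=\Gamma^k_{p+2\,i}=\tfrac12\eta'\delta^k_i,\qquad 1\le i,j,k\le p+1,
$$
with $\eta'=d\eta/d\omega$. Writing $Y_z=\sum_j y_{j,z}E_j$ and expanding $\nab_{\p_{\bar z}}Y_z=\sum_j y_{j,z\bar z}E_j+\sum_{j,k}y_{k,\bar z}y_{j,z}\nab_{E_k}E_j$, I would read off that the $E_l$-component ($1\le l\le p+1$) equals $y_{l,z\bar z}+\tfrac12\eta'(y_{l,\bar z}y_{p+2,z}+y_{p+2,\bar z}y_{l,z})$, and the $E_{p+2}$-component equals $y_{p+2,z\bar z}-\tfrac12\eta'e^\eta\sum_{k=1}^{p+1}y_{k,\bar z}y_{k,z}$.

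\textbf{Step 3 (translation to the $\varphi_i$).} Substituting $y_{l,z}=e^{-\eta/2}\varphi_l$, hence $y_{l,\bar z}=e^{-\eta/2}\bar\varphi_l$ as $y_l$ is real, $y_{p+2,z}=\varphi_{p+2}$, and $\eta_{\bar z}=\eta'\bar\varphi_{p+2}$, differentiation of $\varphi_l=e^{\eta/2}y_{l,z}$ gives $\varphi_{l,\bar z}=\tfrac12\eta'\bar\varphi_{p+2}\varphi_l+e^{\eta/2}y_{l,z\bar z}$. Inserting the vanishing of the $E_l$-component and simplifying, the two $\bar\varphi_{p+2}\varphi_l$ terms cancel and leave $2\varphi_{l,\bar z}=-\eta'\bar\varphi_l\varphi_{p+2}$, which is \eqref{i}. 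Likewise $\varphi_{p+2,\bar z}=y_{p+2,z\bar z}$, and the vanishing of the $E_{p+2}$-component combined with $y_{k,\bar z}y_{k,z}=e^{-\eta}|\varphi_k|^2$ gives $2\varphi_{p+2,\bar z}=\eta'\sum_{k=1}^{p+1}|\varphi_k|^2$, which is \eqref{ii}. Every manipulation is an equivalence, so the same chain run backwards recovers $\nab_{\p_{\bar z}}Y_z=0$ from \eqref{i}--\eqref{ii}, proving both directions.

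I expect the main obstacle to be bookkeeping rather than conceptual: the delicate point is the cancellation in Step 3 producing the clean right-hand side $-\eta'\bar\varphi_l\varphi_{p+2}$, where the $\tfrac12\eta'\bar\varphi_{p+2}\varphi_l$ contribution arising from differentiating the warping factor $e^{\eta/2}$ must exactly annihilate the matching term coming from the connection. Keeping straight which index carries the factor $e^{\eta/2}$ versus $e^{-\eta/2}$, and the placement of the bar on $\varphi_{p+2}$, is the only genuine source of potential sign or factor errors.
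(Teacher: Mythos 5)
Your proof is correct and follows essentially the same route as the paper: compute the Christoffel symbols of the warped-product metric, expand the harmonicity condition $\nab_{\p_{\bar z}}Y_z=0$ componentwise, and translate into the $\varphi_i$, with the cancellation in your Step 3 being exactly the computation the paper performs. The only difference is that you explicitly justify why minimality is equivalent to $\nab_{\p_{\bar z}}Y_z=0$ via the conformality relations \eqref{iii}, a step the paper simply asserts.
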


\begin{proof}
Since the induced metric of the immersion $Y$ is given by $\<\cdot,\cdot\>_Y=\lambda^2(dx^2+dy^2)$, we have
\begin{equation}\label{iso}
\<\partial, \partial\>_Y=0, \,\ \ \  \<\partial, \bar{\partial}\>_Y=\lambda^2/2.
\end{equation}
Now, the minimality of $Y$ is equivalent to the following condition
\begin{equation}\label{minimality}
\tilde{\nabla}_\partial dY(\bar{\partial})=0,
\end{equation}
where $\tilde{\nabla}$ is the Levi-Civita connection of the manifold $(\mathbb{R}^{p+1}\times\mathbb{R}_+,\tilde{\<\cdot,\cdot\>})$.
On that manifold we consider the coordinate vector fields
$$
\partial_i=\frac{\partial}{\partial x_i},  \,\  1\leq i\leq p+1, \quad
\partial_{p+2}=\frac{\partial}{\partial\omega}.
$$
According to the above, equation \eqref{minimality} is equivalent to the following
\begin{equation}\label{minimality2}
0=\tilde{\nabla}_\partial\left( \sum^{p+2}_{i=1} y_{i,\bar{z}}\partial_i\right)=\sum^{p+2}_{i=1}y_{i,\bar{z}z}\partial_i + \sum^{p+2}_{i=1} y_{i,\bar{z}}\tilde{\nabla}_{dY(\partial)} \partial_i ,
\end{equation}
where
$$
\tilde{\nabla}_{dY(\partial)} \partial_i=\sum_{j=1}^{p+2}y_{i,z}\tilde{\nabla}_{\partial_j} \partial_i \quad
\text{and} \quad
\tilde{\nabla}_{\partial_j} \partial_i=\sum_{k=1}^{p+2}\Gamma_{ij}^k\partial_k.
$$
A direct computation yields that the Christoffel symbols corresponding to the coordinates $(x_1,\dots,x_{p+1},\omega)$ of the Riemannian manifold $(\mathbb{R}^{p+1}\times\mathbb{R}_+,\tilde{\<\cdot,\cdot\>})$, are given by

\[ \Gamma_{ij}^k=
\begin{cases}
\frac{1}{2}\eta',\ \, \quad \quad p+2=i \neq j=k\leq p+1 \\
-\frac{1}{2}e^\eta\eta', \quad i=j\leq p+1, k=p+2 \\
0, \quad \quad \quad \ \, \text{otherwise}.
 \end{cases}
\]
Then,  equation (\ref{minimality2}) is equivalent to the following system

\begin{align*}
&y_{i,\bar{z}z}+\frac{1}{2}\eta'\left(y_{i,\bar{z}}y_{p+2,z}+y_{i,z}y_{p+2,\bar{z}}\right)=0, \quad 1\leq i\leq p+1, \\
&y_{p+2,\bar{z}z}-\frac{1}{2}e^\eta\eta'\sum_{i=1}^{p+1}\left|y_{i,z}\right|^2=0.
\end{align*}

Taking into account the definition of the functions $\varphi_i, 1\leq i\leq p+2$, the first of the above equations is easily seen to be equivalent to equation \eqref{i}, whereas the second one is equivalent to \eqref{ii} in the statement. \qed
\end{proof}

\medskip

Let $Y\colon L^2\to(\mathbb{R}^{p+1}\times\mathbb{R}_+,\widetilde{\<\cdot,\cdot\>})$ be an isometric immersion. We assume that at least one of $\varphi_i$, $1\leq i\leq p+2$, is nonzero at each point. Without loss of generality, we may assume that the complex function
$$
\Psi=\varphi_1-\mathsf{i}\varphi_2,
$$
vanishes nowhere.
Then, the first equation of \eqref{iii}, is equivalently written as
\iffalse
$$
(\varphi_1+\mathsf{i}\varphi_2)\Psi + \sum^{p+2}_{i=3}\varphi_i^2=0.
$$ 
\fi

$$
\varphi_1+\mathsf{i}\varphi_2 + \Psi\sum^p_{i=1}G_i^2=0,
$$ 
were $G_i=\varphi_i/\Psi$, for $1\leq i\leq p$. Hence, we have
$$\varphi_1=\frac{1}{2}\Psi\left(1-\sum^p_{i=1}G_i^2\right), \quad \varphi_2=\frac{\mathsf{i}
}{2}\Psi\left(1+\sum_{i=1}^pG_i^2\right)
$$
and thus
\begin{equation}\label{p}
(\varphi_1,\dots,\varphi_{p+2})=\frac{1}{2}\Psi\left(1-\<G,G\>, \mathsf{i}(1+\<G,G\>), 2G\right),
\end{equation}
where $G=(G_1,\dots,G_p)\colon U\to\mathbb{C}^p$.

\begin{lemma}\label{fi2}
An isometric immersion $Y\colon L^2\to(\mathbb{R}^{p+1}\times\mathbb{R}_+,\widetilde{\<\cdot,\cdot\>})$ is minimal if and only if the following conditions hold
\begin{align}
& \Psi_{\bar{z}}=\frac{1}{2}\eta'|\Psi|^2\<\bar{G},\bar{G}\>G_p,\label{Psizbar}\\ 
&G_{\bar{z}}=\frac{1}{2}\eta'\bar{\Psi}\left(\left(\frac{1+|\<G,G\>|^2}{2}+||G||^2\right)e_p-G_p\left(\<\bar{G},\bar{G}\>G+\bar{G}\right)\right), \label{Gz}\\
&\omega_z=\frac{\partial y_{p+2}}{\partial z}=\varphi_{p+2}=\Psi G_p.\label{wz}
\end{align}
\end{lemma}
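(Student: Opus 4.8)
The plan is to start from Lemma~\ref{fi}, which already characterizes minimality of $Y$ by the system \eqref{i}--\eqref{ii} in the functions $\varphi_i$, and to substitute the parametrization \eqref{p}, in which $\varphi_1=\tfrac12\Psi(1-\<G,G\>)$, $\varphi_2=\tfrac{\mathsf i}2\Psi(1+\<G,G\>)$ and $\varphi_{i}=\Psi G_{i-2}$ for $3\le i\le p+2$. Since \eqref{wz} is merely the last component of \eqref{p}, namely $\varphi_{p+2}=\Psi G_p$, it holds identically and needs no argument; the real content is to show that \eqref{i}--\eqref{ii} is equivalent to the pair \eqref{Psizbar}--\eqref{Gz}. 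Throughout, $\<\cdot,\cdot\>$ denotes the complex bilinear form with $\overline{\<G,G\>}=\<\bar G,\bar G\>$ and $\<G,G\>_{\bar z}=2\<G,G_{\bar z}\>$, while $\|\cdot\|$ is the Hermitian norm, and I will use freely that $\Psi$ vanishes nowhere, so that $|\Psi|^2/\Psi=\bar\Psi$.

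First I would treat the two ``sphere'' components $i=1,2$. Differentiating $\varphi_1,\varphi_2$ in $\bar z$ and expressing $\bar\varphi_i$ via $\<\bar G,\bar G\>$, equation \eqref{i} for $i=1$ and for $i=2$ becomes a pair of scalar identities. \emph{Adding} them, all $\<G,G\>_{\bar z}$ terms cancel and one is left with exactly \eqref{Psizbar}, namely $2\Psi_{\bar z}=\eta'|\Psi|^2\<\bar G,\bar G\>G_p$; \emph{subtracting} them yields a second relation, say $(**)$, of the form $2\Psi_{\bar z}\<G,G\>+2\Psi\<G,G\>_{\bar z}=\eta'|\Psi|^2G_p$. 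Next I would handle the remaining directions: for $3\le i\le p+1$ one has $\varphi_i=\Psi G_{i-2}$, and \eqref{i} reads $2\Psi_{\bar z}G_j+2\Psi G_{j,\bar z}=-\eta'|\Psi|^2\bar G_jG_p$ for $1\le j\le p-1$; for the density direction, a direct computation using $|1-w|^2+|1+w|^2=2+2|w|^2$ gives $\sum_{i=1}^{p+1}|\varphi_i|^2=|\Psi|^2\big(\tfrac{1+|\<G,G\>|^2}2+\|G\|^2-|G_p|^2\big)$, so that \eqref{ii} becomes the analogous equation for $j=p$. Substituting $\Psi_{\bar z}$ from \eqref{Psizbar} and dividing by $2\Psi$, these turn precisely into the components of \eqref{Gz}: for $j<p$ the $\Psi_{\bar z}$ term combines with $\bar G_jG_p$ into $G_p(\<\bar G,\bar G\>G_j+\bar G_j)$, and for $j=p$ one picks up the extra $e_p$-term $\big(\tfrac{1+|\<G,G\>|^2}2+\|G\|^2\big)$. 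Reading every step backwards gives the converse implication.

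The only genuine obstacle is that the components $i=1,2$ produce \emph{two} equations, \eqref{Psizbar} and $(**)$, whereas we wish to retain only \eqref{Psizbar}. For the stated equivalence to be correct I must therefore check that $(**)$ is not an independent constraint but is automatically implied by \eqref{Psizbar} together with \eqref{Gz}. This is the step requiring care: I would pair \eqref{Gz} with $G$ to compute $\<G,G_{\bar z}\>$, use the identity $\<\bar G,\bar G\>\<G,G\>=|\<G,G\>|^2$, and observe that the $\|G\|^2$ contributions cancel, leaving $\<G,G\>_{\bar z}=2\<G,G_{\bar z}\>=\tfrac12\eta'\bar\Psi G_p(1-|\<G,G\>|^2)$. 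Inserting this and \eqref{Psizbar} into the left-hand side of $(**)$ collapses it to $\eta'|\Psi|^2G_p$, so $(**)$ holds identically.

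With this verification in hand the logic closes cleanly in both directions: assuming minimality, \eqref{i}--\eqref{ii} yield \eqref{Psizbar} (from the sum of the $i=1,2$ relations) and then \eqref{Gz} (from the remaining relations after substituting $\Psi_{\bar z}$), while \eqref{wz} is automatic; conversely, assuming \eqref{Psizbar}--\eqref{Gz}, the consistency check recovers $(**)$, hence the $i=1,2$ equations, and the direct substitutions recover \eqref{i} for $i\ge3$ and \eqref{ii}. Thus \eqref{i}--\eqref{ii} is equivalent to \eqref{Psizbar}--\eqref{Gz}, which is the assertion of the lemma.
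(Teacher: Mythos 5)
Your proof is correct and follows essentially the same route as the paper: substitute the parametrization \eqref{p} into the system \eqref{i}--\eqref{ii} of Lemma \ref{fi}, extract \eqref{Psizbar} from the $i=1,2$ components, and obtain \eqref{Gz} from the remaining components after eliminating $\Psi_{\bar z}$. Your explicit check that the second relation $(**)$ (the paper's equation \eqref{esoteriko}) is a consequence of \eqref{Psizbar} and \eqref{Gz}, via $\<G,G_{\bar z}\>=\tfrac14\eta'\bar\Psi G_p(1-|\<G,G\>|^2)$, supplies a step the paper leaves implicit when it discards that equation.
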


\begin{proof}
Using \eqref{p}, equation \eqref{i} of Lemma \ref{fi} is written as
\begin{align}
(1-\<G,G\>)\Psi_{\bar{z}}-2\Psi\<G,G_{\bar{z}}\>&=-\frac{1}{2}\eta'|\Psi|^2(1-\<\bar{G},\bar{G}\>)G_p, \quad \text{for} \quad i=1,\nonumber \\
(1+\<G,G\>)\Psi_{\bar{z}}+2\Psi\<G,G_{\bar{z}}\>&=\frac{1}{2}\eta'|\Psi|^2(1+\<\bar{G},\bar{G}\>)G_p, \quad \text{for} \quad  i=2, \nonumber\\
2\Psi_{\bar{z}}G_{i-2}+2\Psi G_{i-2,\bar{z}}&=-\eta'|\Psi|^2\bar{G}_{i-2}G_p, \quad \text{for} \quad 3\leq i\leq p+1. \label{pro}
\end{align}
 The first two equations of the above are equivalent to
\begin{align}
&\Psi_{\bar{z}}=\frac{1}{2}\eta'|\Psi|^2\<\bar{G},\bar{G}\>G_p,\nonumber\\
\label{esoteriko}
&\<G,G_{\bar{z}}\>=\frac{1}{4}\eta'\bar{\Psi}(1-|\<\bar{G},\bar{G}\>|^2)G_p.
\end{align}
Similarly, equation \eqref{ii} becomes
\begin{equation} \label{tel}
\Psi_{\bar{z}}G_p+\Psi G_{p,\bar{z}}=\frac{1}{4}\eta'|\Psi|^2\left(1+|\<G,G\>|^2+2\|G\|^2-2|G_p|^2\right).
\end{equation}
Now, plugging \eqref{Psizbar} in \eqref{pro} and \eqref{tel} we obtain
\begin{align*}
&G_{j,\bar{z}}=\frac{1}{2}\eta'\bar{\Psi}G_p\left(\<\bar{G},\bar{G}\>G_j+\bar{G}_j\right), \quad 1\leq j\leq p-1,\\
&G_{p,\bar{z}}=\frac{1}{2}\eta'\bar{\Psi}\left(G_p\left(\<\bar{G},\bar{G}\>G_p+\bar{G}_p\right)-\|G\|^2-\frac{1+|\<G,G\>|^2}{2}\right),
\end{align*}
respectively. These two equations are equivalent to equation \eqref{Gz} and this completes the proof.\qed
\end{proof}

\medskip
It follows from Lemma \ref{fi} that if $g$ is a substantial ($n-2$)-singular minimal surface, then the complex functions $\varphi_i, 1\leq i\leq p+2$, associated to a complex chart, are non-holomorphic. This excludes a parametric description of $(n-2)$-singular minimal surfaces similar to the classical Weierstrass representation of minimal surfaces in terms of holomorphic data. However, it should be useful to provide a method that parametrizes locally all  $(n-2)$-singular minimal surfaces. This is achieved by the following theorem. 

\vspace{2ex}

\begin{theorem}\label{WT}
Let  $g\colon (L^2,\<\cdot,\cdot\>)\to \mathbb{R}^{p+2}_+,p\geq1,$ be a substantial ($n-2$)-singular minimal surface with density vector $e$, where $L^2$ is an oriented two-dimensional Riemannian manifold. Then, for any simply connected complex chart $(U, z)$ of $(L^2,\<\cdot,\cdot\>)$, there exists a non-holomorphic map $G=\sum_{i=1}^p G_ie_i\colon U\to\mathbb{C}^p$, with $G_p\neq0$, where $\{e_1, \dots, e_{p-1}, e_p=e\}$ is an orthonormal basis of $\mathbb{C}^p$, that satisfies: 
\begin{equation}\label{eq}
G_{\bar{z}z}=-\frac{n}{n-2}\frac{||G_{\bar{z}}||^2}{||F(G)||^2}G_pF(G)+\frac{||G_{\bar{z}}||^2}{||F(G)||^2}\<G,G\>\bar{G}_pF(G)+\frac{\<G_{\bar{z}},\overline{F(G)}\>}{||F(G)||^2}(F(G))_z
\end{equation}
 and 
 \begin{equation}\label{cond}
 G_{\bar{z}}\wedge F(G)=0,  
\end{equation}
 where $F(G)\neq0$ and $F\colon\mathbb{C}^p\to\mathbb{C}^p$ is given by  
$$
F(z)=\left(\frac{1+|\<z,z\>|^2}{2}+||z||^2\right)e_p-z_p\left(\<\bar{z},\bar{z}\>z+\bar{z}\right),\, \quad z=(z_1,\cdots, z_p)\in\mathbb{C}^p.
$$
Moreover, the immersion $g$ is given by

\begin{multline}\label{par}
g={\rm Re}\Big(\int\frac{\<\bar{G}_z,F(G)\>}{||F(G)||^2}(1-\<G,G\>)hdz, \mathsf{i}\int\frac{\<\bar{G}_z,F(G)\>}{||F(G)||^2} (1+\<G,G\>)hdz,\\ 2\int \frac{\<\bar{G}_z,F(G)\>}{||F(G)||^2} G_1hdz,\dots, 2\int \frac{\<\bar{G}_z,F(G)\>}{||F(G)||^2} G_{p-1}hdz,(n-2)h\Big),
\end{multline}
where 
$$
h=\frac{1}{n-2}\left(\frac{n}{2}\right)^\frac{2}{n}e^{\frac{2}{n-2}{\rm Re}\int\frac{\<\bar{G}_z,F(G)\>}{||F(G)^2||}G_pdz}.
$$

Conversely, consider a non-holomorphic map $G=(G_1,\dots, G_p)\colon U\to\mathbb{C}^p$, such that $G_p\neq0$ and $F(G)\neq0$ on a simply connected open subset $U\subset\mathbb{C}$, that satisfies (\ref{eq}) and (\ref{cond}). Then, the surface $g\colon L^2\to \mathbb{R}^{p+2},p\geq1,$ parametrized by (\ref{par}) is an ($n-2$)-singular minimal surface with density vector $e=(0,\dots, 0,1).$
\end{theorem}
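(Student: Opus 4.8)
The plan is to route the whole argument through the minimality criterion of Lemma~\ref{fi2}, reading \eqref{cond} and \eqref{eq} off as, respectively, the zeroth- and first-order content of the system \eqref{Psizbar}--\eqref{wz}. First I would invoke Theorem~\ref{minimal} together with the isometry $\tau$: a substantial $(n-2)$-singular minimal $g$ corresponds to a minimal $Y=\tau\circ g$ in the warped product, so by Lemma~\ref{fi2} the reduced data $G,\Psi$ satisfy \eqref{Psizbar}, \eqref{Gz} and \eqref{wz}. Equation \eqref{Gz} says precisely that $G_{\bar z}$ is the scalar multiple $\tfrac12\eta'\bar\Psi$ of $F(G)$, which is nonzero by substantiality; hence $G_{\bar z}\wedge F(G)=0$, which is \eqref{cond}, and $G$ is non-holomorphic since $\Psi$ vanishes nowhere.

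The core computation is the derivation of \eqref{eq}. The key observation is that pairing the conjugate of \eqref{Gz} against $F(G)$ produces the two scalar identities
\[
\frac{\langle\bar G_z,F(G)\rangle}{\|F(G)\|^2}=\tfrac12\eta'\Psi,
\qquad
\frac{\|G_{\bar z}\|^2}{\|F(G)\|^2}=\tfrac14(\eta')^2|\Psi|^2,
\]
which express every $\Psi$-dependent quantity in \eqref{eq} through $G$ alone. Writing \eqref{Gz} as $G_{\bar z}=\rho\,F(G)$ with $\rho=\tfrac12\eta'\bar\Psi$, I would differentiate in $z$ to get $G_{\bar z z}=\rho_z F(G)+\rho\,(F(G))_z$. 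The term $\rho\,(F(G))_z$ is already the last summand of \eqref{eq}, once one notes $\rho=\langle G_{\bar z},\overline{F(G)}\rangle/\|F(G)\|^2$. For $\rho_z$ I would use $\eta'=\eta'(\omega)$ with $\eta''=-\eta'/\omega$ (from \eqref{h}), the height equation $\omega_z=\Psi G_p$ of \eqref{wz}, and $\bar\Psi_z=\overline{\Psi_{\bar z}}$ from \eqref{Psizbar}; the relation $\tfrac{1}{\eta'\omega}=\tfrac{n}{2(n-2)}$ then collapses $\rho_z$ to $\tfrac{\|G_{\bar z}\|^2}{\|F(G)\|^2}\big(-\tfrac{n}{n-2}G_p+\langle G,G\rangle\bar G_p\big)$, and multiplying by $F(G)$ yields the first two summands of \eqref{eq}.

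For the parametrization \eqref{par} I would integrate the Weierstrass data. By construction $y_{i,z}=e^{-\eta/2}\varphi_i$ for $i\le p+1$ and, via \eqref{p}, each $\varphi_i$ equals $\tfrac12\Psi$ times the corresponding entry of $(1-\langle G,G\rangle,\,\mathsf{i}(1+\langle G,G\rangle),\,2G)$; the first identity above rewrites $\tfrac12\eta'\Psi$ as $\langle\bar G_z,F(G)\rangle/\|F(G)\|^2$, while the universal factor $e^{-\eta/2}/(\tfrac12\eta')$ is absorbed into the single function $h$, so that the real parts of the resulting line integrals reproduce the first $p+1$ coordinates of $g$. The last coordinate is the height $\phi=(\tfrac{n}{2}\omega)^{2/n}$; since $\omega_z=\Psi G_p$ gives $(\log\omega)_z=\tfrac{n}{n-2}\tfrac{\langle\bar G_z,F(G)\rangle}{\|F(G)\|^2}G_p$, integrating recovers $\omega$, hence $\phi$ and $h$, in the stated exponential form, with the normalizing constants and the standard factor in $f=2\,\mathrm{Re}\!\int f_z\,dz$ tracked at this stage.

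For the converse I would reverse these steps on a simply connected $U$. Given $G$ with \eqref{cond}, $F(G)\neq0$, $G_p\neq0$, condition \eqref{cond} forces $G_{\bar z}=\rho\,F(G)$ with $\rho=\langle G_{\bar z},\overline{F(G)}\rangle/\|F(G)\|^2$; I would then use the $h$-formula to define the height $\phi$, and thence $\omega$ and $\Psi$ via the first scalar identity, define $\varphi_i$ by \eqref{p}, and check that \eqref{iii} holds and that \eqref{Psizbar}, \eqref{Gz}, \eqref{wz} are all satisfied, so that Lemma~\ref{fi2} (hence Lemma~\ref{fi}) yields a minimal $Y$ and therefore an $(n-2)$-singular minimal $g=\tau^{-1}\circ Y$ with density $e_{p+2}$. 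The main obstacle is exactly this integrability check: one must verify that \eqref{eq} is precisely the condition making the $1$-form integrated in \eqref{par} closed, so that $h$ and $g$ are well defined on simply connected $U$, and that the reconstructed $\rho$ obeys the $\rho_z$ formula derived above, i.e.\ that differentiating the defining relations introduces no constraint beyond \eqref{eq}. Concretely, this reduces to showing that \eqref{eq} together with \eqref{cond} is equivalent to that formula for $\rho_z$, which is what forces \eqref{Psizbar} and \eqref{wz} to hold rather than merely \eqref{Gz}.
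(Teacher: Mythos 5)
Your overall route is the paper's own: pass to $Y=\tau\circ g$ via Theorem \ref{minimal}, read \eqref{cond} and \eqref{eq} off the system \eqref{Psizbar}--\eqref{wz} of Lemma \ref{fi2} by differentiating $G_{\bar z}=\tfrac12\eta'\bar\Psi F(G)$ in $z$, and in the converse direction recover $\omega$ and $\Psi$ from an exact $1$-form whose closedness is exactly \eqref{eq}. Your computation of $\rho_z$ and the identification $\rho=\langle G_{\bar z},\overline{F(G)}\rangle/\|F(G)\|^2$ match the paper's equations \eqref{Gzz}--\eqref{Y}, and the ``main obstacle'' you single out in the converse is precisely the closed form $\Omega$ that the paper integrates by Poincar\'e's lemma.

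There is, however, one genuine omission. The theorem asserts $F(G)\neq 0$ (and $G_p\neq0$) as part of its conclusion, and everything you do --- dividing by $\|F(G)\|^2$ in \eqref{eq}, concluding that $G$ is non-holomorphic because $\Psi$ never vanishes --- presupposes it. You dispose of this with the phrase ``nonzero by substantiality,'' but that is not immediate: from \eqref{Gz}, $F(G)=0$ only gives $G_{\bar z}=0$, which by itself contradicts nothing. The paper needs a dedicated contradiction argument here: $F(G)=0$ forces $G_{\bar z}=0$, hence by \eqref{esoteriko} $|\langle G,G\rangle|=1$; pairing the relation $F(G)=0$ with $e_p$ then yields $G_p\langle\bar G,\bar G\rangle=\bar G_p$ and $\|G\|^2+1=2|G_p|^2$; differentiating the resulting identity $2\bar G_pe_p=\langle\bar G,\bar G\rangle G+\bar G$ in $z$ forces $G_z=0$, so $G$ is constant, whence $(\varphi_1,\dots,\varphi_{p+2})$ is a constant multiple of $\Psi$ and some nontrivial combination $b_2y_1-b_1y_2$ is constant --- only at this point does substantiality give the contradiction. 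Likewise $G_p\neq0$ requires the observation that otherwise \eqref{wz} makes $\omega$, hence the last coordinate of $g$, constant, again against substantiality. Without these steps the statement you prove is weaker than the one claimed; with them added, your argument coincides with the paper's.
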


\begin{proof}
At first we assume that $g\colon L^2\to \mathbb{R}^{p+2}_+,p\geq1,$ is an ($n-2$)-singular minimal surface with density vector $e$. 
To begin with, we claim that $F(G)\neq0$. Suppose to the contrary that $F(G)=0$. Then, from equation \eqref{Gz} we have that $G_{\bar{z}}=0$ and thus, by \eqref{esoteriko} it follows that $|\<G,G\>|=1$. Hence, our assumption $F(G)=0$ is equivalently written as
\begin{equation}\label{6}
(\|G\|^2+1)e_p=G_p\left(\<\bar{G},\bar{G}\>G+\bar{G}\right).
\end{equation}
 By taking the inner product with $e_p$, we find that
\begin{equation}\label{ep}
\|G\|^2+1=G_p^2\<\bar{G},\bar{G}\>+|G_p|^2.
\end{equation}
Observe that $G_p\neq0$. If otherwise, it follows from equation \eqref{wz} that $\omega$ is constant, which contradicts our assumption that $g$ has substantial codimension. It follows from equation \eqref{ep} that $G_p^2\<\bar{G},\bar{G}\>$ is real. Using that $|\<G,G\>|=1$, we observe that
$$
G_p\<\bar{G},\bar{G}\>=\varepsilon\bar{G}_p,
$$
where $\varepsilon=\pm1$.
Plugging the above into equation \eqref{ep} it follows that $\varepsilon=1$ and thus, equation \eqref{ep} is equivalently written as
$$
\|G\|^2+1=2|G_p|^2.
$$
Hence, equation \eqref{6} becomes
$$
2\bar{G}_pe_p=\<\bar{G},\bar{G}\>G+\bar{G}.
$$
Differentiating the above with respect to $z$, using that $G_{\bar{z}}=0$ and since  $|\<G,G\>|=1$ we conclude that $G_z=0$ and consequently $G$ is constant. Then, it follows from equation \eqref{p} that
$$
(\varphi_1,\cdots, \varphi_{p+2})=\Psi(b_1,\cdots, b_{p+2}),
$$
where $b_1,\cdots, b_{p+2}$ are constant complex numbers with $(b_1, b_2)\neq(0,0)$. Since, $\varphi_i=e^\frac{\eta}{2}y_{i,z}$, $i=1,2$, it follows that the function $b_2y_1-b_1y_2$ is constant. This contradicts that fact that $g$ has substantial codimension.

Theorem \ref{minimal} implies that the immersion $Y\colon L^2\to(\mathbb{R}^{p+1}\times\mathbb{R}_+,\widetilde{\<\cdot,\cdot\>})$ given by $Y=\tau\circ g$ is minimal and then, Lemma \ref{fi2} is applicable. Now, by differentiating \eqref{Gz} with respect to $z$ we obtain
\begin{equation}\label{Gzz}
G_{z\bar{z}}=\frac{1}{2}\eta''\omega_z\bar{\Psi}F(G)+\frac{1}{2}\eta'\bar{\Psi}_zF(G)+\frac{1}{2}\eta'\bar{\Psi}(F(G))_z.
\end{equation}
Moreover, it follows from \eqref{Gz} that
\begin{equation}\label{Y}
\eta'\bar{\Psi}=2\frac{\<G_{\bar{z}},\bar{F(G)}\>}{\|F(G)\|^2}.
\end{equation}
The above equation immediately implies that $G$ cannot be holomorphic.

Taking into account \eqref{Y}, equation \eqref{Gz} is equivalently written as
\begin{equation}\label{Gz'}
G_{\bar{z}}=\frac{\<G_{\bar{z}},\bar{F(G)}\>}{\|F(G)\|^2}F(G),
\end{equation}
which immediately yields equation \eqref{cond}.
Using equations \eqref{wz}, \eqref{Y} and \eqref{Gz'}, it follows that \eqref{Gzz} is equivalent to the desired equation \eqref{eq}.
 
Since $\eta'=2(n-2)/n\omega$, it follows from \eqref{Y} that
$$
\Psi=\frac{n\omega}{n-2}\frac{\<\bar{G}_z,F(G)\>}{\|F(G)\|^2}.
$$
Hence, equation \eqref{wz} is written as
$$
({\rm log}\omega)_z=\frac{n}{n-2}\frac{\<\bar{G}_z,F(G)\>}{\|F(G)\|^2}G_p
$$
and thus
\begin{equation}
\omega=e^{\frac{n}{n-2}{\rm Re}\int\frac{\<\bar{G}_z,F(G)\>}{||F(G)^2||}G_pdz}.
\end{equation}
We see that the immersion $Y$ is locally parametrized as follows
\begin{equation}\label{Y'}
Y={\rm Re}\left(\int\Lambda\left(1-\<G,G\>\right)dz,\mathsf{i}\int \Lambda\left(1+\<G,G\>\right)dz, 2\int\Lambda G_1dz,\dots, 2\int\Lambda G_{p-1}dz,\omega \right),
 \end{equation}
where 
$$
\Lambda=\frac{1}{n-2}\left(\frac{n}{2}\right)^\frac{2}{n}\omega^\frac{2}{n}\frac{\<\bar{G}_z,F(G)\>}{\|F(G)\|^2}.
$$
Then, parametrization \eqref{par} follows from \eqref{Y'} and the fact that $g=\tau^{-1}\circ Y$, where $\tau\colon\left(\mathbb{R}_+^{p+2}, \<\cdot,\cdot\>_{n-2}\right)\to\left(\mathbb{R}^{p+1}\times\mathbb{R}_+,\widetilde{\<\cdot,\cdot\>}\right)$ is the aforementioned isometry.

Conversely, consider a non-holomorphic map $G=(G_1,\dots, G_p)\colon U\to\mathbb{C}^p$ with $G_p\neq0$, on a simply connected open subset $U\subset\mathbb{C}$ that satisfies (\ref{eq}) and (\ref{cond}). 
We now define the 1-form
$$
\Omega=\frac{\<G_{\bar{z}},\overline{F(G)}\>}{2\|F(G)\|^2}\bar{G}_pd\bar{z}+\frac{\<\bar{G_{\bar{z}}},F(G)\>}{2\|F(G)\|^2}G_pdz.
$$
Using \eqref{eq} and \eqref{Gz'} it follows that 
$$
\left(\frac{\<G_{\bar{z}},\overline{F(G)}\>}{2\|F(G)\|^2}\bar{G}_p\right)_z=\|G_{\bar{z}}\|^2\|F(G)\|^2\left(\frac{1+|\<G,G\>|^2}{2}+ \|G\|^2-\frac{2(n-1)}{n-2}|G_p|^2\right).
$$
This means that $\Omega$ is a closed form.
Then, Poincare's Lemma implies that $\Omega$ is exact, namely there exists a real function $\nu$ that satisfies
$$
\Omega=\nu_{\bar{z}}d\bar{z}+\nu_zdz.
$$

We define the functions  
$$
\omega=e^{\frac{2n}{n-2}\nu} \quad \text{and} \quad \Psi=\frac{\omega_z}{G_p}.
$$
Using  \eqref{eq}, \eqref{Gz'}, \eqref{h} and the exactness of $\Omega$, we confirm that $\Psi$, $G$ and $\omega$ fulfil equations  \eqref{Psizbar}, \eqref{Gz} and \eqref{wz}.
We define the map $Y\colon U\subset\mathbb{C}\to(\mathbb{R}^{p+1}\times\mathbb{R}_+,\widetilde{\<\cdot,\cdot\>})$ given by \eqref{Y'}
and we argue on the open subset of $U$ where $Y$ is regular. Thus, $Y$ is minimal due to Lemma \ref{fi2}. Now, define the immersion $g=\tau^{-1}\circ Y$. Observe that $g$ is minimal in the Riemannian manifold $(\mathbb{R}_+^{p+2}, \<\cdot,\cdot\>_{n-2})$. Then, it follows from Theorem \ref{minimal} that $g$ is an $(n-2)$-singular minimal surface of $\mathbb{R}^{p+2}$ with density vector $e=(0,\dots,0,1)$. The fact that $g$ is parametrized as in \eqref{par} follows from the above parametrization of $Y$. \qed

\bigskip

\end{proof}

\bigskip 

\bigskip

{\bf Data Availibility} The author declare that the data supporting the findings of this study are available within the paper.
\bigskip

{\bf Declarations}

\bigskip

{\bf Conflict of interest} The author have no relevant financial or non-financial
interests to disclose.

\bigskip

\noindent Athina Eleni Kanellopoulou\\
University of Ioannina \\
Department of Mathematics\\
Ioannina--Greece\\
e-mail: alinakanellopoulou@gmail.com

\end{document}